\newcounter{mparcnt}
\newtheorem{theorem}{Theorem}[section]%[section]
\newtheorem{lemma}[theorem]{Lemma}%[section]
\newtheorem{proposition}[theorem]{Proposition}%[section]
\newtheorem{definition}[theorem]{Definition}%[section]
\newtheorem{remark}[theorem]{Remark}
\newtheorem{thmx}{Theorem}
\def\om{\omega}
\def\Om{\Omega}
\def\p{\partial}
\def\De{\Delta}
\def\S{{\Sigma}}
\def\<{\langle}
\def\>{\rangle}
\def\na{\nabla}
\providecommand{\abs}[1]{\lvert#1\rvert}
\providecommand{\Abs}[1]{\left\lvert#1\right\rvert}
\newcommand{\mbB}{\mathbb{B}}
\newcommand{\mbN}{\mathbb{N}}
\newcommand{\mbR}{\mathbb{R}}
\newcommand{\mbS}{\mathbb{S}}
\newcommand{\mcC}{\mathcal{C}}
\newcommand{\mcH}{\mathcal{H}}
\newcommand{\mcL}{\mathcal{L}}
\newcommand{\mcW}{\mathcal{W}}
\newcommand{\mfH}{\mathbf{H}}
\newcommand{\mfR}{\mathbf{R}}
\newcommand{\rd}{{\rm d}}
\newcommand{\ra}{\rightarrow}
\newcommand{\eq}[1]{\begin{equation}\begin{alignedat}{2} #1 \end{alignedat}\end{equation}}
\numberwithin{equation} {section}
\begin{document}
	
	\title[Willmore inequality]{Willmore-type inequality in unbounded convex sets}

%----------
\author[Jia]{Xiaohan Jia}
	\address[X.J]{School of Mathematics\\
Southeast University\\
 211189, Nanjing, P.R. China}
	\email{xhjia@seu.edu.cn}
    
%--------
	
\author[Wang]{Guofang Wang}
	\address[G.W]{Mathematisches Institut\\
		Universit\"at Freiburg\\
	Ernst-Zermelo-Str.1\\
		79104\\
  \newline\indent Freiburg\\ Germany}
	\email{guofang.wang@math.uni-freiburg.de}

%-------
 
\author[Xia]{Chao Xia}
	\address[C.X]{School of Mathematical Sciences\\
		Xiamen University\\
		361005\\ Xiamen\\ P.R. China}
	\email{chaoxia@xmu.edu.cn}

%------

\author[Zhang]{Xuwen Zhang}
	\address[X.Z]{Mathematisches Institut\\
		Universit\"at Freiburg\\
	Ernst-Zermelo-Str.1\\
		79104\\
  \newline\indent Freiburg\\ Germany}
\email{xuwen.zhang@math.uni-freiburg.de}

%-------
\begin{abstract}

In this paper we prove the following Willmore-type inequality: On an unbounded closed convex set $K\subset\mbR^{n+1}$ $( n\ge 2$), for any embedded hypersurface $\S\subset K$ with boundary $\p\S\subset \p K$ satisfying a certain contact angle condition, there holds
\eq{
{\frac1{n+1}\int_{\S}\Abs{{H}}^n \rd A\ge \rm AVR}(K)\abs{\mbB^{n+1}}.
}
Moreover, equality holds if and only if $\S$ a part of a sphere and  $K\setminus \Om$ is a part of the solid cone determined by $\S$. Here $\Om$ is the bounded domain enclosed by $\S$ and $\p K$, $H$ is the normalized
mean curvature of $\S$, and ${\rm AVR}(K)$ is the asymptotic volume ratio of $K$.
We also prove an anisotropic version of this Willmore-type inequality. 
As a special case, we obtain a Willmore-type inequality for anisotropic capillary hypersurfaces in a half-space.

		\
		
		\noindent {\bf MSC 2020:} 53C42, 53C20.\\
		{\bf Keywords:}   Willmore inequality, free boundary hypersurface, capillarity, anisotropic mean curvature %Relative isoperimetric inequality,  
  %asymptotic volume ratio
  \\
		
	\end{abstract}

\maketitle
%\tableofcontents
%=========

\section{Introduction}

The classical Willmore inequality  \cites{Willmore68,Chen71} states that for a bounded domain $\Om\subset\mbR^{n+1}$ with smooth boundary, there holds
\eq{\label{ineq-Willmore-classical}
\frac1{n+1}\int_{\p\Om}\Abs{H}^n \rd A\ge \abs{\mbB^{n+1}}
,
}
where $H$ is the normalized mean curvature of $\p\Om$ (i.e. $H=\frac{{\rm tr}B}n$ where $B$ is the second fundamental form of $\p\Om\subset\mfR^{n+1}$), and $\abs{\mbB^{n+1}}$ is the volume of unit ball $\mbB^{n+1}$.
Moreover, equality in \eqref{ineq-Willmore-classical} holds if and only if $\Om$ is a round ball.

Recently, Agostiniani-Fogagnolo-Mazzieri \cite{AFM20} proved the following Willmore-type inequality in Riemannian manifolds with nonnegative Ricci curvature.
\begin{thmx}[\cite{AFM20}*{Theorem 1.1}]\label{Thm-AFM20-Thm1.1}
Let $(M^{n+1},g)$  $(n\geq2)$ be a complete Riemannian manifold with nonnegative Ricci curvature and $\Om\subset M$ a bounded open set with smooth boundary.
Then
\eq{
 \frac 1 {n+1}\int_{\p\Om}
\Abs{H}^n \rd A\ge {\rm AVR}(g)\abs{\mbB^{n+1}},
}
where ${\rm AVR}(g)$ is the asymptotic volume ratio of M 
(i.e. the limit $\lim_{r\ra\infty}\frac{(n+1)\abs{B_r(p)}}{r^{n+1}\abs{\mbS^n}}$, which exists thanks to the Bishop-Gromov volume comparison theorem).
Moreover, if ${\rm AVR}(g)>0$, equality holds if and only if $M\setminus\Om$ is isometric to $([r_0,\infty)\times\p\Om,\rd r^2+\left(\frac{r}{r_0}g_{\p\Om})^2\right)$ with
\eq{
r_0
=\left(\frac{\abs{\p\Om}}{{\rm AVR}(g)\abs{\mbS^n}}\right)^\frac1n.
}
\end{thmx}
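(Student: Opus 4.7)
The plan is to follow a nonlinear potential-theoretic approach: associate to $\Om$ its $p$-capacitary potential, build a geometric functional on the level sets that is monotone under $\mathrm{Ric}\ge 0$, and compare its value at $\p\Om$ with its limit at infinity, where $\mathrm{AVR}(g)$ will emerge from Bishop--Gromov. The whole argument is carried out for $p\in(1,n+1)$ first, and the Willmore integrand $\abs{H}^n$ is recovered only in the degenerate limit $p\to 1^+$.

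\emph{Setup and monotonicity.} For $p\in(1,n+1)$, let $u_p\colon M\setminus\Om\to(0,1]$ be the $p$-capacitary potential, i.e. the weak solution of $\De_p u_p=\div(\abs{\na u_p}^{p-2}\na u_p)=0$ in $M\setminus\overline{\Om}$ with $u_p=1$ on $\p\Om$ and $u_p\to 0$ at infinity; subcriticality $p<n+1$ together with $\mathrm{AVR}(g)>0$ gives the existence of such a finite-energy solution, with standard $C^{1,\alpha}_{\rm loc}$ regularity. For carefully tuned exponents $\alpha_p,\beta_p$, I would consider
\begin{equation*}
\Phi_p(t):=t^{\alpha_p}\int_{\{u_p=t\}}\abs{\na u_p}^{\beta_p}\rd A
\end{equation*}
and differentiate via the coarea formula. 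Substituting the Bochner identity applied to $\abs{\na u_p}$, using the $p$-Laplace equation to eliminate the Hessian-trace term, and absorbing the remaining Hessian norm by the refined Kato inequality for $p$-harmonic functions, the curvature contribution reduces to a positive multiple of $\mathrm{Ric}(\na u_p,\na u_p)$; together with $\mathrm{Ric}\ge 0$ this yields $\Phi_p'(t)\ge 0$ on the complement of the (small) critical set $\{\na u_p=0\}$, which is handled by a Sard-type approximation.

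\emph{Limits and the inequality.} As $t\to 0^+$ the level sets of $u_p$ escape every compact set, $u_p$ matches $p$-Green function asymptotics $u_p\asymp d(\cdot,o)^{-(n-p+1)/(p-1)}$ at infinity, and Bishop--Gromov applied to large geodesic balls yields
\begin{equation*}
\lim_{t\to 0^+}\Phi_p(t)=C_{n,p}\,\mathrm{AVR}(g)\,\abs{\mbS^n}
\end{equation*}
for an explicit $C_{n,p}$ bounded and nonzero as $p\to 1^+$. At the other end, the Hopf lemma gives $\abs{\na u_p}>0$ on $\p\Om$; differentiating $u_p\equiv 1$ along tangential directions and inserting into $\De_p u_p=0$ expresses $\p_\nu\abs{\na u_p}$ at $\p\Om$ in terms of the mean curvature $H$. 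The exponents $\alpha_p,\beta_p$ are chosen precisely so that $\Phi_p(1)$ converges as $p\to 1^+$ to a multiple of $\int_{\p\Om}\abs{H}^n\rd A$. Combining the monotonicity $\Phi_p(1)\ge\lim_{t\to 0^+}\Phi_p(t)$ with these two limit computations and sending $p\to 1^+$ yields exactly the claimed bound $\frac{1}{n+1}\int_{\p\Om}\abs{H}^n\rd A\ge\mathrm{AVR}(g)\abs{\mbB^{n+1}}$.

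\emph{Rigidity.} If equality holds, then $\Phi_p$ must be constant on $(0,1]$ (in the appropriate limiting sense), which forces every inequality in Step~2 to be an equality: $\mathrm{Ric}(\na u_p,\na u_p)\equiv 0$ along $\na u_p$, the refined Kato inequality is saturated so that the nonzero level sets are totally umbilical with constant mean curvature, and $\abs{\na u_p}$ is constant on each level set. These rigidity features give a warped-product splitting of $(M\setminus\Om, g)$ in the normal direction, identifying it with $([r_0,\infty)\times\p\Om,\rd r^2+(r/r_0)^2 g_{\p\Om})$; the explicit $r_0$ is then fixed by matching $\abs{\p\Om}=\mathrm{AVR}(g)\abs{\mbS^n}r_0^n$, which reproduces the stated value. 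The main obstacle is establishing the monotonicity sharply: the correct choice of $(\alpha_p,\beta_p)$ is dictated by a fine balance between the Bochner remainder and the Kato gap in the nonlinear regime $p\ne 2$, and handling the low regularity at $\{\na u_p=0\}$ is delicate. The degenerate limit $p\to 1^+$ is the other technical heart, since it is exactly there that the potential-theoretic functional $\Phi_p(1)$ collapses onto the geometric $L^n$-norm of $H$.
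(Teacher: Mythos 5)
This statement is Theorem A of the paper, which is quoted from \cite{AFM20} and not proved here: the authors only record it as motivation and point to Wang's short proof \cite{Wang23}, whose Heintze--Karcher strategy is what they actually adapt in Section 3 for their own results. Your proposal instead reconstructs the original nonlinear potential-theoretic route of Agostiniani--Fogagnolo--Mazzieri: $p$-capacitary potentials, a monotone level-set functional $\Phi_p$, Bochner plus refined Kato, and the degenerate limit $p\to 1^+$ to recover $\int_{\p\Om}\abs{H}^n$. That is a genuinely different argument from the one this paper is built on. Wang's route takes the normal exponential map $\zeta(x,t)=\exp_x(t\nu(x))$ from $\p\Om$, uses Riccati comparison under $\mathrm{Ric}\ge 0$ to bound the Jacobian by $(1+H(x)t)_+^{\,n}$ up to the cut/focal time, integrates to get $\abs{B_R(p)\setminus\Om}\le \frac{R^{n+1}}{n+1}\int_{\{H>0\}}H^n\rd A+O(R^n)$, and divides by $R^{n+1}$; this is elementary, avoids PDE theory entirely, and is exactly what Proposition \ref{Prop-surjective-halfspace-ani} and the proof of Theorem \ref{Thm-aniso-convex} generalize to convex sets and anisotropic norms (where no analogue of the $p$-capacitary machinery with free-boundary conditions on $\p K$ is readily available). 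What your route buys in exchange is a whole family of intermediate inequalities at each $p$ (capacity--AVR comparisons) and a rigidity analysis driven by elliptic unique continuation rather than by equality in the volume comparison.

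Two caveats on your sketch as it stands. First, the two technical hearts you yourself flag — the correct exponents $(\alpha_p,\beta_p)$ making $\Phi_p'\ge 0$ after absorbing the Bochner remainder with the Kato gap, and the convergence of both $\Phi_p(1)$ and $\lim_{t\to 0^+}\Phi_p(t)$ as $p\to 1^+$ — are precisely where the work lies; as written the argument is a plan, not a proof, and the identification of $\lim_{t\to0^+}\Phi_p(t)$ with $C_{n,p}\,\mathrm{AVR}(g)\abs{\mbS^n}$ requires sharp gradient and decay estimates for $u_p$ at infinity that you assert rather than derive. Second, the case $\mathrm{AVR}(g)=0$ should be dispatched separately (the inequality is then trivial), since your existence statement for $u_p$ with the stated asymptotics uses $\mathrm{AVR}(g)>0$. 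Neither point is a wrong turn, but be aware that the paper's own machinery proves nothing along these lines, so you cannot lean on it for the missing estimates.
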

Soon after Wang \cite{Wang23} gave a short proof of Theorem \ref{Thm-AFM20-Thm1.1}, which is based on Heintze-Karcher's comparison theorem in Riemannian geometry.

In this paper, we study a similar problem in \textit{Euclidean unbounded closed convex sets}. Let us first introduce some terminologies and properties of unbounded closed convex sets, which are needed to state our main result.
Let $K\subset\mbR^{n+1}$ $( n\ge2)$ be an unbounded closed convex set.
We denote by ${\rm Reg}(\p K)$ the regular part of $\p K$, that is, the set of points near which $\p K$ can be locally written as a $C^1$-hypersurface, and  ${\rm Sing}(\p K)=\p K\setminus{\rm Reg}(\p K)$ the singular part of $\p K$.
For $x\in {\rm Reg}(\p K)$, we denote by $\bar N(x)$ the outward unit normal to $\p K$ at $x$. Let $B_R(x)$ denote the open ball of radius $R$ centered at $x$.

We define
\eq{
{\rm AVR}(K)
=\lim_{R\to\infty} \frac{\vert B_R(0)\cap K\vert}{\abs{\mbB^{n+1}}R^{n+1}}.
}
To see that ${\rm AVR}(K)$ and its generalization are well-defined, see Section \ref{App-B} below. Moreover,
It is not difficult to check  that 
\eq{
{\rm AVR}(K)
=\lim_{R\to\infty} \frac{\vert B_R(p)\cap K\vert}{\abs{\mbB^{n+1}}R^{n+1}},
}
for any $p\in \mbR^{n+1}$.

Our main result in this paper is the following Willmore-type inequality in unbounded convex sets.

\begin{theorem}\label{Thm-iso-convex}
Let $K\subset \mbR^{n+1}$ be an unbounded closed convex set. 
Let $\S\subset{K}$ be a compact, embedded $C^2$-hypersurface with boundary $\p\S\subset{\rm Reg}(\p K)$ intersecting $\p K$ transversally such that
\eq{
	\<\nu(x),\bar N(x)\>\geq 0,\quad \hbox{ for any } x\in\p\S.
}
Here $\nu$ denotes the outward unit normal to $\S$ with respect to $\Om$, the bounded domain enclosed by $\S$ and $\p K$.
Then there holds
\eq{\label{ineq-willmore-iso-cone}
%\abs{\mcW^F_{1,\mcC}}
\frac{1}{n+1}\int_\S \Abs{H}^{n} \rd A\ge {\rm AVR}(K)\abs{\mbB^{n+1}}.
}
Moreover, equality in \eqref{ineq-willmore-iso-cone} holds if and only if $\S$ is a part of a sphere and $K\setminus \Om$ is a part of the solid cone determined by $\S$.

\end{theorem}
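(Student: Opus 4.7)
The plan is to adapt the classical normal-flow proof of the Willmore inequality to this unbounded convex setting by tracking how the outward normal flow from $\S$ fills out the exterior region $K\setminus\Om$ asymptotically. I would introduce the normal exponential map $\Phi\colon\S\times[0,\infty)\to\mbR^{n+1}$, $\Phi(x,t)=x+t\nu(x)$, and let $\S_+\subseteq\S$ denote the subset on which the second fundamental form $B$ of $\S$ is positive semidefinite; there $H\ge 0$ and the Jacobian $\det(I+tB)=\prod_{i=1}^n(1+t\kappa_i)$ of $\Phi$ is bounded above by $(1+tH)^n$ by AM--GM.

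The geometric heart of the argument would be a coverage claim: for a.e.\ $y\in K\setminus\overline\Om$, the nearest-point projection $\pi(y)\in\p\Om$ lies in the open part $\S\setminus\p\S$, so that $y=\Phi(\pi(y),t)$ for some $t\ge0$, and $\pi(y)\in\S_+$ automatically by local distance minimisation. To prove this, I would rule out $\pi(y)\in(\p K\cap\overline\Om)\setminus\p\S$: at such a regular boundary point, $y-\pi(y)$ would be a non-negative multiple of the outward normal $\bar N(\pi(y))$ (spanning the outward normal cone of $\overline\Om$ there), while $y\in K$ forces $(y-\pi(y))\cdot\bar N\le0$, yielding $y=\pi(y)\in\overline\Om$, a contradiction. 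The corner case $\pi(y)\in\p\S$ is handled similarly by working with the normal cone of $\overline\Om$ there, which is spanned by $\nu$ and $\bar N$; together with $y\in K$ and the contact angle hypothesis $\<\nu,\bar N\>\ge0$, this forces the decomposition of $y-\pi(y)$ to degenerate, leaving only an $(n+1)$-negligible contribution from the free-boundary locus $\{\<\nu,\bar N\>=0\}$.

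Combining the coverage with the area formula (and compactness of $\S\subset B_D(0)$, so that $|\Phi(x,t)|\le R$ implies $t\le R+D$), one obtains for every large $R$
\eq{
\abs{K\cap B_R\setminus\overline\Om}\le\int_{\S_+}\int_0^{R+D}(1+tH)^n\,\rd t\,\rd A=\int_{\S_+}\frac{(1+(R+D)H)^{n+1}-1}{(n+1)H}\,\rd A.
}
Dividing by $R^{n+1}$ and letting $R\to\infty$, the left-hand side tends to ${\rm AVR}(K)\abs{\mbB^{n+1}}$ by the definition of ${\rm AVR}(K)$, while the right-hand side tends by dominated convergence to $\int_{\S_+}\frac{H^n}{n+1}\,\rd A\le\frac1{n+1}\int_\S\Abs{H}^n\,\rd A$, proving \eqref{ineq-willmore-iso-cone}. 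For the equality case, saturation of AM--GM forces $\kappa_1=\cdots=\kappa_n$ throughout $\S_+$, so $\S$ is totally umbilical and hence a spherical cap; the tightness of the coverage then simultaneously forces $K\setminus\Om$ to be the solid cone over $\S$.

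The main obstacle will be making the coverage claim watertight under only $C^2$-regularity of $\S$ and bare convexity of $K$: in particular, isolating the precise role played by $\<\nu,\bar N\>\ge0$ near $\p\S$, and verifying that no essential mass of $K\cap B_R$ is lost to the exceptional projection sets as $R\to\infty$. A minor secondary point is confirming that $\S_+$ is nonempty (hence carries the inequality), which follows by choosing a supporting hyperplane to $\overline\Om$ that first touches $\S$ at an interior point.
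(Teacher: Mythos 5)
The overall strategy is the right one (and essentially the paper's): parametrise $K\setminus\overline\Om$ by the outward normal flow from $\S$, bound the volume via the area formula and AM--GM, divide by $R^{n+1}$, and let $R\to\infty$ to compare with ${\rm AVR}(K)$. The ruling out of projections landing in $\p K\setminus\p\S$ via the outward-normal-cone argument, and the use of the contact-angle hypothesis $\<\nu,\bar N\>\ge 0$ to control the corner case at $\p\S$, are both in line with the paper's Proposition~\ref{Prop-surjective-halfspace-ani}.

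However, there is a genuine gap in your coverage claim. You assert that the nearest-point projection $\pi(y)$ lands in $\S_+$ (the set where $B\ge 0$) ``automatically by local distance minimisation.'' This is false. If $y=p+t\nu(p)$ with $p=\pi(y)$, the second-order optimality condition for distance minimisation (the closed ball $\overline{B_t(y)}$ touching $\overline\Om$ only at $p$) gives $1+t\kappa_i(p)\ge 0$ for each $i$, i.e.\ the Jacobian $\prod_i(1+t\kappa_i)$ is nonnegative and $p$ lies before its first focal point in the direction $\nu$. It does \emph{not} force $\kappa_i(p)\ge 0$. (Test case: a saddle point $p$ of $\S$ with small curvatures; any $y$ on the outward normal at distance $t<1/\max_i(-\kappa_i(p))$ has $p$ as a legitimate nearest point, yet $B(p)\not\ge 0$.) Consequently your key inequality
\[
\abs{K\cap B_R\setminus\overline\Om}\le\int_{\S_+}\int_0^{R+D}(1+tH)^n\,\rd t\,\rd A
\]
is not a consequence of the area formula plus the (correct) coverage statement, and is not true in general: it omits the contribution of points of $K\setminus\overline\Om$ whose projection lands in $\S\setminus\S_+$, which need not be empty.

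The fix, which is what the paper does, is to cover $\{y\in K\setminus\overline\Om: d(y,\overline\Om)\le R\}$ by the flow from \emph{all} of $\S$ up to the focal cutoff $\tau(x)$ (with $\tau=+\infty$ where $B\ge 0$ and $\tau=-1/\min_i\kappa_i$ otherwise), apply AM--GM on the whole range $t\le\tau(x)$ where $1+t\kappa_i\ge 0$, and then split $\S$ as $\S_+=\{H>0\}$ versus $\{H\le 0\}$; on the latter $(1+tH)^n\le 1$ so the contribution is $O(R)$, and on the former you pick up exactly $\frac{R^{n+1}}{n+1}\int_{\S_+}H^n\,\rd A+O(R^n)$. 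Your argument would also work once this decomposition is inserted, since the part of $\{H>0\}$ where some $\kappa_i<0$ is absorbed into $\S_+$ via AM--GM rather than being erroneously discarded. Separately, the rigidity step you sketch (``tightness of the coverage forces $K\setminus\Om$ to be a cone'') is considerably less detailed than the paper's, which constructs an auxiliary convex set $\widetilde K$ by replacing $\Om$ with the truncated cone over $\S$ and proves a monotonicity formula for $R\mapsto R^{-(n+1)}|\mcW^F_R(x_0)\cap\widetilde K|$; as stated, your sketch does not explain why $\p K\setminus\Om$ must coincide with the cone.
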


Our theorem can be viewed as an extrinsic counterpart of Theorem \ref{Thm-AFM20-Thm1.1}.
The approach to this theorem is inspired by Wang's short proof \cite{Wang23} of Theorem \ref{Thm-AFM20-Thm1.1} based on Heintze-Karcher's comparison \cite{HK78}, and also inspired by our recent works on Heintze-Karcher-type inequalities in various circumstances, see \cites{JWXZ22,JWXZ23,JWXZ23b}.
We refer the interested reader to \cite{JWXZ23b}*{Theorem 1.2} for the Heintze-Karcher-type inequality in arbitrary convex sets, which could be viewed as a "dual" version of the above Willmore inequality in unbounded convex sets.

{We would like to call attention to a result by Choe-Ghomi-Ritore \cite{CGR06}, which says that for a compact free boundary hypersurface $\S\subset\mbR^{n+1}$ {\it outside a convex set,} there holds
\eq{
\label{eq1.3} \frac{1}{n+1}\int_\S \Abs{H}^n dA\ge \frac12 \abs{\mbB^{n+1}}} with equality if and only if $\S$ is a hemisphere lying in a half-space.} This inequality leads to an optimal relative  isoperimetric inequality in \cite{CGR07}. See also \cite{LWW23}. 
In contrast all our results in this paper hold for compact hypersurfaces {\it inside a convex set}.

In view of our previous work \cite{JWXZ23b}, it is not surprising that we could in fact establish the following Willmore-type inequality in unbounded convex sets, with anisotropy taken into account. Here anisotropy means a smooth positive function $F: \mathbb{S}^n\to \mathbb{R}_+$ on the unit sphere $(\mathbb{S}^n,\sigma)$ such that 
$(\nabla^2 F+F \sigma)
$ is positive definite.
In literature $F$ is usually called a Minkowski norm. ($F$ induces a norm if $F$ is even, i.e, $F(-x)=F(x)$. In this paper we do not require the evenness.)
With respect to a Minkowski norm $F$,
one can define the 
(unit) Wulff ball $\mcW^F$, the  asymptotic volume ratio ${\rm AVR}_F$ for unbounded convex sets, the anisotropic unit normal $\nu_F$ and the normalized anisotropic mean curvature $H^F$ for a hypersurface. For more definitions and notation see
Section \ref{Sec-2}, in particular Section \ref{App-B}.
\begin{theorem}\label{Thm-aniso-convex}
Let $K\subset \mbR^{n+1}$ be an unbounded closed convex set. 
Let $\S\subset{K}$ be a compact, embedded $C^2$-hypersurface with boundary $\p\S\subset{\rm Reg}(\p K)$ intersecting $\p K$ transversally such that
\eq{\label{condi-convexcone-ani}
	\<\nu_F(x),\bar N(x)\>\geq 0,\quad \hbox{ for any } x\in\p\S.
}
{Here $\nu_F(x)=DF(\nu(x))$, and $\nu$ is the outward unit normal to $\S$ with respect to $\Om$, the bounded domain enclosed by $\S$ and $\p K$.}
Then there holds
\eq{\label{ineq-willmore-aniso-cone}
%\abs{\mcW^F_{1,\mcC}}
\frac{1}{n+1}\int_\S F(\nu)\abs{{H^F}}^{n} \rd A\ge {\rm AVR}_F(K)\abs{\mcW^F}.
}
Moreover, equality in \eqref{ineq-willmore-aniso-cone} holds if and only if $\S$ is a part of a Wulff shape {centered at some $x_0\in\mfR^{n+1}$}, and $K\setminus\Om$ is the solid cone {over $\S$ with center $x_0$}.
\end{theorem}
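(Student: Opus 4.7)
The plan is to adapt the classical ``Gauss-map'' proof of the Willmore inequality to the anisotropic setting on unbounded convex sets, in the spirit of Wang \cite{Wang23} and of our earlier work \cite{JWXZ23b}. The key objects are the anisotropic Gauss map $\nu_F:\S\to\p\mcW^F$, $x\mapsto DF(\nu(x))$, whose Jacobian equals $\prod_i\kappa_i^F$ in the anisotropic principal curvatures of $\S$, and the recession cone $K_\infty$ of $K$, in terms of which ${\rm AVR}_F(K)|\mcW^F|=|K_\infty\cap\mcW^F|$. The goal is to show that $\nu_F$ covers $\p\mcW^F\cap K_\infty$ at points where all anisotropic principal curvatures are nonnegative, then conclude via the area formula and AM--GM.

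\smallskip

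\noindent\emph{Step 1 (covering $\p\mcW^F\cap K_\infty$).} For each $\xi\in\p\mcW^F\cap K_\infty$, let $\eta\in\mbS^n$ be the Euclidean outer unit normal to $\mcW^F$ at $\xi$, so $\xi=DF(\eta)$. Consider $\ell(x):=\<x,\eta\>$ on $\overline\Om$. Euler's identity yields $\<\xi,\eta\>=F(\eta)>0$, so together with $\xi\in K_\infty$ this makes $\ell$ unbounded above on $K$; in particular the maximum of $\ell$ on the compact set $\overline\Om$ is attained at some $x_*\in\p\Om$. If $x_*$ were in the relative interior of $\p K\cap\overline\Om$, then $\eta$ would be a support normal to $K$ at $x_*$ (from the subdifferential if $x_*\in\mathrm{Sing}(\p K)$), and $\xi\in K_\infty$ would force $\<\xi,\eta\>\le0$, contradicting $F(\eta)>0$. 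If $x_*\in\p\S$, the first-order max condition gives $\eta=a\nu(x_*)+b\bar N(x_*)$ with $a,b\ge0$; an infinitesimal-convexity argument using $D^2F$ then shows that the combination of $\<DF(\eta),\bar N\>\le0$ (from $\xi\in K_\infty$) and the contact-angle condition $\<DF(\nu),\bar N\>\ge0$ forces $b=0$ and hence $\eta=\nu(x_*)$, up to an $\mcH^n$-negligible set of $\xi$'s. In all cases, there is $x_*\in\S$ with $\nu_F(x_*)=\xi$, and the second-order max condition at $x_*$ forces $\kappa_i^F(x_*)\ge0$ for all $i$.

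\smallskip

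\noindent\emph{Step 2 (area formula, divergence theorem, AM--GM).} Let $\S_+:=\{x\in\S:\kappa_i^F(x)\ge0\ \forall i\}$. Applying the area formula to $\nu_F|_{\S_+}$ with weight $F(\nu)$ and using the identity $F(\nu(x))=F(\eta(\nu_F(x)))$ along the map, together with the covering from Step 1, gives
\eq{
\int_{\S_+}F(\nu)\prod_i\kappa_i^F\,\rd A \;\ge\; \int_{\p\mcW^F\cap K_\infty}F(\eta)\,\rd\mcH^n(\eta).
}
Since $K_\infty$ is a cone with vertex at the origin and $\p\mcW^F=\{F^o=1\}$, the divergence theorem applied to the position vector field on $\mcW^F\cap K_\infty$, together with $\<DF(\eta),\eta\>=F(\eta)$, yields
\eq{
\int_{\p\mcW^F\cap K_\infty}F(\eta)\,\rd\mcH^n(\eta) = (n+1)|\mcW^F\cap K_\infty| = (n+1){\rm AVR}_F(K)|\mcW^F|.
}
The AM--GM inequality on $\S_+$ gives $\prod_i\kappa_i^F\le(H^F)^n$, whence
\eq{
\int_\S F(\nu)|H^F|^n\,\rd A \;\ge\; \int_{\S_+}F(\nu)(H^F)^n\,\rd A \;\ge\; (n+1){\rm AVR}_F(K)|\mcW^F|,
}
which is exactly \eqref{ineq-willmore-aniso-cone}. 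Equality in AM--GM forces every point of $\S_+$ to be anisotropically umbilic, so $\S$ lies on a Wulff shape centered at some $x_0\in\mbR^{n+1}$; saturation of the Gauss-map covering then pins down $K\setminus\Om$ as the solid anisotropic cone over $\S$ with apex $x_0$.

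\smallskip

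\noindent\emph{Main obstacle.} The delicate part of the argument is the boundary analysis on $\p\S$ in Step 1, where one must reconcile the Euclidean outer-normal-cone condition ($\eta=a\nu+b\bar N$ with $a,b\ge0$) with the anisotropic contact-angle condition \eqref{condi-convexcone-ani}, phrased in terms of $DF(\nu)$ rather than $\eta$; the infinitesimal-convexity argument above uses $D^2F$ in an essential way. A secondary technical point is handling $\mathrm{Sing}(\p K)$ in the global max analysis on $\overline\Om$, which one treats either via subdifferential support normals or by a smoothing approximation. The remaining steps are routine identities for $F$, $F^o$, and $\mcW^F$.
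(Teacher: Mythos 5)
Your proof of the inequality takes a genuinely different route from the paper's own argument in Section \ref{Sec-3}: the paper's proof of Theorem \ref{Thm-aniso-convex} is a Heintze--Karcher-type argument that parametrizes the exterior neighborhood of $\overline\Om$ by the anisotropic parallel flow $\zeta_F(x,t)=x+t\Phi(\nu(x))$ (Proposition \ref{Prop-surjective-halfspace-ani}), estimates volume growth of $\{d_{F_\ast}(\cdot,\overline\Om)\le R\}$ via the area formula, and divides by $R^{n+1}$, while you instead extend the Gauss-map argument of Section \ref{Sec-4} (Proposition \ref{Prop-Gauss-map}) from the half-space to a general unbounded convex $K$, by replacing $\overline{\mbR^{n+1}_+}$ with the tangent cone at infinity $K_\infty$ and using ${\rm AVR}_F(K)|\mcW^F|=|\mcW^F\cap K_\infty|$ (Proposition \ref{Prop-tangent-cone-infinity}). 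Your Step 1 is sound modulo phrasing: the ``infinitesimal-convexity argument using $D^2F$'' at a boundary maximum $x_*\in\p\S$ is exactly the anisotropic angle comparison, Proposition \ref{Prop-angle-compare}, applied to $\eta$ lying on the geodesic from $\nu(x_*)$ to $\bar N(x_*)$; it yields $0\le\<\Phi(\nu),\bar N\>\le\<\Phi(\eta),\bar N\>=\<\xi,\bar N\>\le0$ and hence $\eta=\nu(x_*)$ for \emph{every} such $\xi$ (no exceptional set is needed), while the interior-of-$\p K$ case is ruled out because $\xi\in K_\infty$ pairs nonpositively with any outer normal of $K$ yet $\<\xi,\eta\>=F(\eta)>0$. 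Step 2 (area formula, divergence theorem on $\mcW^F\cap K_\infty$, AM--GM) is correct. So for the inequality your route is a legitimate shortcut that the paper only carries out in the half-space case; what you gain is a cleaner one-shot covering argument, what you lose is that the paper's flow parametrization is exactly what drives its rigidity analysis.

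The genuine gap is in the equality case. Saturation of AM--GM gives umbilicity only on $\S_+$, and equality in the chain merely forces $\int_{\S\setminus\S_+}F(\nu)|H^F|^n=0$; you still need an argument (the paper's Claim 1) to conclude $\S=\S_+$ and hence that $\S$ is a connected piece of a single Wulff shape $\p\mcW^F_{R_0}(x_0)$. More seriously, ``saturation of the Gauss-map covering pins down $K\setminus\Om$'' does not follow: the Gauss map of $\S$ is blind to the geometry of $\p K$ away from $\p\S$, so multiplicity-one coverage of $\p\mcW^F\cap K_\infty$ together with $\S\subset\p\mcW^F_{R_0}(x_0)$ only fixes $\S$ and its contact with $\p K$ (namely $\<x-x_0,\bar N(x)\>=0$ along $\p\S$), not the shape of $\p K\setminus\Om$ beyond $\p\S$. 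The paper closes this with a separate monotonicity argument (Claim 2 in the proof): it forms the modified convex set $\widetilde K=(K\setminus\Om)\cup(\mcC_{x_0}\cap\mcW^F_{R_0}(x_0))$, shows $R\mapsto|\mcW^F_R(x_0)\cap\widetilde K|/R^{n+1}$ is non-increasing, and deduces from equality that $\<x-x_0,\bar N(x)\>\equiv0$ on all of $\p K\setminus\Om$, which is what forces $K\setminus\Om$ to be the solid cone over $\S$ with apex $x_0$. Without this (or an equivalent substitute), your rigidity claim is unsupported.
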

Note that Theorem \ref{Thm-iso-convex} is a special case of Theorem \ref{Thm-aniso-convex} with $F(\xi)=\abs{\xi}$.

{The Willmore-type inequality \eqref{ineq-willmore-aniso-cone} may be applied to prove relative isoperimetric inequality in unbounded convex sets, as Choe-Ghomi-Ritore \cite{CGR07} did outside convex sets. See also \cite{FM23}. We remark that the relative isoperimetric-type inequality in unbounded convex sets in the isotropic case, i.e. $F(\xi)=|\xi|$, has been proved by  Leonardi-Ritoré-Vernadakis \cite{LRV22}.}

{Let $\{E_i\}_{1\leq i\leq (n+1)}$ be the coordinate basis of $\mfR^{n+1}$.
}
Consider the particular case when $K=\overline{\mbR^{n+1}_+}{\coloneqq\{x\in\mfR^{n+1}:\left<x,E_{n+1}\right>=x_{n+1}\geq0\}}$ is the (closed) upper half-space, by choosing $F(\xi)=|\xi|-\cos \theta_0 \<\xi, E_{n+1}\>$ for some $\theta_0\in (0, \pi)$, we get from Theorem \ref{Thm-aniso-convex} the following Willmore-type inequality for capillary hypersurfaces.
\begin{theorem}\label{Thm-Willore-halfspace-iso}
Given $\theta_0\in(0,\pi)$.
Let  $\S\subset\overline{\mbR_+^{n+1}}$ be a compact, embedded $C^2$-hypersurface with boundary $\p\S\subset \p\mbR_+^{n+1}$ intersecting $\p\mbR^{n+1}_+$ transversally such that
\eq{%\label{condi-isotropic-halfspace}
\left<\nu, -E_{n+1}\right>\geq -\cos\theta_0,\quad \hbox{ for any } x\in\p\S.
}
Then there holds
\eq{\label{ineq-willmore-isotropic-halfspace-intro}
\frac{1}{n+1}\int_{\Sigma}\left(1-\cos\theta_0\<\nu, E_{n+1}\>\right)\Abs{{H}}^{n} \rd A\ge \abs{B_{1,\theta_0}},
}
where  
$B_{1,\theta_0}=B_1(-\cos\theta_0 E_{n+1})\cap {\mbR_+^{n+1}}$ {is the unit ball with center $-\cos\theta_0 E_{n+1}$ truncated by the upper half-space}.
Moreover, equality in \eqref{ineq-willmore-isotropic-halfspace-intro} holds if and only if $\S$ is a $\theta_0$-capillary spherical cap in $\overline{\mbR_+^{n+1}}$.
\end{theorem}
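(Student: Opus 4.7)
The plan is to specialize Theorem~\ref{Thm-aniso-convex} to the convex set $K=\overline{\mbR_+^{n+1}}$ and the Minkowski norm
\eq{
F(\xi)\coloneqq \abs{\xi}-\cos\theta_0\<\xi,E_{n+1}\>, \quad \xi\in\mfR^{n+1}\setminus\{0\},
}
and then to verify that every ingredient appearing in \eqref{ineq-willmore-aniso-cone} reduces to the corresponding object in \eqref{ineq-willmore-isotropic-halfspace-intro}. The steps are: (i) $F$ is an admissible Minkowski norm and its Wulff shape is a translated unit ball; (ii) $\nu_F$, $F(\nu)$, $H^F$, and the contact-angle condition specialize correctly; (iii) ${\rm AVR}_F(K)\abs{\mcW^F}=\abs{B_{1,\theta_0}}$; (iv) the rigidity statement translates to the capillary one.

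\textbf{Identifying the anisotropic data.} Since $\abs{\cos\theta_0}<1$ on $(0,\pi)$, $F>0$ on $\mbS^n$; moreover, because $\<\cdot,E_{n+1}\>$ is linear, on $\mbS^n$ the operator $\nabla^2 F+F\sigma$ coincides with its counterpart for $\abs{\cdot}$, so $F$ is admissible. The dual description $\mcW^F=\{y\in\mfR^{n+1}:\<y,\xi\>\leq F(\xi)\text{ for all }\xi\in\mbS^n\}$ simplifies to $\abs{y+\cos\theta_0 E_{n+1}}\leq 1$, whence $\mcW^F=B_1(-\cos\theta_0 E_{n+1})$. Differentiating yields $\nu_F=DF(\nu)=\nu-\cos\theta_0 E_{n+1}$, so $F(\nu)=1-\cos\theta_0\<\nu,E_{n+1}\>$, and with $\bar N=-E_{n+1}$ on $\p\mbR_+^{n+1}$ the condition $\<\nu_F,\bar N\>\geq 0$ becomes precisely $\<\nu,-E_{n+1}\>\geq -\cos\theta_0$. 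Finally, since $E_{n+1}$ is a constant vector field, $\div_\S E_{n+1}=0$, so $nH^F=\div_\S\nu_F=\div_\S\nu=nH$; hence $H^F=H$ and the integrand in \eqref{ineq-willmore-aniso-cone} reduces to that of \eqref{ineq-willmore-isotropic-halfspace-intro}.

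\textbf{Asymptotic volume ratio and rigidity.} For every $R>0$, the dilate $R\mcW^F=B_R(-R\cos\theta_0 E_{n+1})$ meets $\overline{\mbR_+^{n+1}}$ in a homothety of $B_{1,\theta_0}$ by the factor $R$, so
\eq{
{\rm AVR}_F\bigl(\overline{\mbR_+^{n+1}}\bigr)\abs{\mcW^F}=\lim_{R\to\infty}\frac{\abs{R\mcW^F\cap\overline{\mbR_+^{n+1}}}}{R^{n+1}}=\abs{B_{1,\theta_0}},
}
completing the specialization. For rigidity, Theorem~\ref{Thm-aniso-convex} forces $\S$ to be a portion of some translated Wulff shape $x_0+r\mcW^F=B_r(x_0-r\cos\theta_0 E_{n+1})$ and $\overline{\mbR_+^{n+1}}\setminus\Om$ to be the solid cone over $\S$ with apex $x_0$; since the only such cone whose complement in a half-space is bounded has its apex on the bounding hyperplane, $x_0\in\p\mbR_+^{n+1}$, and $\S$ is then a $\theta_0$-capillary spherical cap.

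\textbf{Expected difficulty.} The argument is essentially a translation exercise once Theorem~\ref{Thm-aniso-convex} is granted. The two mildly delicate points are the correct interpretation of ${\rm AVR}_F$ as a Wulff-dilation limit (as set up in Section~\ref{App-B}, rather than a Euclidean-ball limit, which does not produce the correct combinatorics on the right-hand side) and the geometric step pinning the apex $x_0$ to $\p\mbR_+^{n+1}$ in the equality case.
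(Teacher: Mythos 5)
Your proposal is correct and follows exactly the paper's route: specialize Theorem~\ref{Thm-aniso-convex} (equivalently Theorem~\ref{Thm-aniso-halfspace} via Theorem~\ref{Thm-Willmore-ani-halfspace-Ch4}) to $K=\overline{\mbR_+^{n+1}}$ and $F(\xi)=\abs{\xi}-\cos\theta_0\<\xi,E_{n+1}\>$, then check that $\mcW^F=B_1(-\cos\theta_0 E_{n+1})$, $F(\nu)=1-\cos\theta_0\<\nu,E_{n+1}\>$, $H^F=H$, ${\rm AVR}_F(K)\abs{\mcW^F}=\abs{B_{1,\theta_0}}$, and that the equality case forces the apex $x_0$ onto $\p\mbR_+^{n+1}$ so that $\Sigma$ is a $\theta_0$-capillary spherical cap. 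The paper leaves these verifications implicit; you have supplied them correctly.
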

More generally, by choosing $F(\xi)$ in Theorem \ref{Thm-aniso-convex} to be $F(\xi)+\om_0\left<\xi,E^F_{n+1}\right>$ for some $\omega_0$, we get the following Willmore-type inequality for anisotropic capillary hypersurfaces.

\begin{theorem}
Given $\om_0\in\left(-F(E_{n+1}),F(-E_{n+1})\right)$.
Let  $\S\subset\overline{\mbR^{n+1}_+}$ be a compact, embedded $C^2$-hypersurface with boundary $\p\S\subset \p\mbR_+^{n+1}$ intersecting $\p\mbR^{n+1}_+$ transversally such that
\eq{
  \<\nu_F(x), -E_{n+1}\>=\om(x)\geq \om_0,\quad  \hbox{ for  any }x\in \partial\Sigma.
}
Then there holds
\eq{\label{ineq-willmore-ani-halfspace-intro}
\frac1{n+1}\int_\S \left(F(\nu)+\om_0\left<\nu,E^F_{n+1}\right>\right)\abs{{H^F}}^n\rd A\ge \abs{\mcW^F_{1,\om_0}},
}
where $\mcW^F_{1,\om_0}=\mcW^F_1(\om_0 E^F_{n+1})\cap {\mbR_+^{n+1}}$, {and $E_{n+1}^F$ is the constant vector defined as \eqref{eq-E-F}}.
% with respect to the inner normal $-\nu$. 
Moreover, equality in \eqref{ineq-willmore-ani-halfspace-intro} holds if and only if $\S$ is an anisotropic $\om_0$-capillary Wulff shape in $\overline{\mbR_+^{n+1}}$.
\end{theorem}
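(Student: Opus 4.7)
The plan is to deduce this statement from Theorem \ref{Thm-aniso-convex} by perturbing the Minkowski norm in a linear direction. Concretely, I introduce the modified anisotropy
$$
\tilde F(\xi) := F(\xi) + \om_0 \<\xi, E^F_{n+1}\>, \quad \xi \in \mbR^{n+1},
$$
and apply Theorem \ref{Thm-aniso-convex} to $\Si \subset K := \overline{\mbR^{n+1}_+}$ with the anisotropy $\tilde F$. This is the natural anisotropic analogue of how Theorem \ref{Thm-Willore-halfspace-iso} was obtained by using $F(\xi) = |\xi| - \cos\theta_0\<\xi, E_{n+1}\>$.

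First, I verify that $\tilde F$ is a Minkowski norm. The assumption $\om_0 \in (-F(E_{n+1}), F(-E_{n+1}))$ together with the convexity of $F$ guarantees $\tilde F > 0$ on $\mbS^n$. The tensor $\nabla^2 \tilde F + \tilde F \sigma$ remains positive definite because adding a linear function to $F$ does not affect it (the spherical Hessian of a linear function is cancelled by the corresponding multiple of $\sigma$). Since $D^2 \tilde F = D^2 F$ pointwise, the normalized anisotropic mean curvature is unchanged, $H^{\tilde F} = H^F$, while the anisotropic unit normal undergoes a constant shift, $\tilde\nu_F = D\tilde F(\nu) = \nu_F + \om_0 E^F_{n+1}$.

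Next, I check the contact condition \eqref{condi-convexcone-ani} for $\tilde F$ on $K$. The outward unit normal to $\p K$ being $\bar N = -E_{n+1}$, and using the normalization of $E^F_{n+1}$ from \eqref{eq-E-F} (which yields $\<E^F_{n+1}, -E_{n+1}\> = -1$), I compute
$$
\<\tilde\nu_F, \bar N\> = \om(x) - \om_0 \geq 0 \quad \text{on } \p\Si,
$$
which is exactly the standing hypothesis. Applying Theorem \ref{Thm-aniso-convex} then gives
$$
\frac{1}{n+1}\int_\Si \tilde F(\nu)\abs{H^{\tilde F}}^n \rd A \geq {\rm AVR}_{\tilde F}(K)\,\abs{\mcW^{\tilde F}}.
$$
The left-hand side matches the one in \eqref{ineq-willmore-ani-halfspace-intro} after expanding $\tilde F$ and invoking $H^{\tilde F}= H^F$. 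For the right-hand side, I exploit the fact that adding a linear function to a Minkowski norm translates its Wulff shape: $\mcW^{\tilde F} = \mcW^F + \om_0 E^F_{n+1} = \mcW^F_1(\om_0 E^F_{n+1})$. A standard rescaling argument (the translation becomes negligible under the $R \to \infty$ dilations defining the asymptotic volume ratio of the half-space) then yields
$$
{\rm AVR}_{\tilde F}(K)\,\abs{\mcW^{\tilde F}} = \abs{\mcW^{\tilde F} \cap \mbR^{n+1}_+} = \abs{\mcW^F_{1,\om_0}}.
$$

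Finally, for rigidity, equality in Theorem \ref{Thm-aniso-convex} forces $\Si$ to be part of a Wulff shape of $\tilde F$ with $K \setminus \Om$ the associated solid $\tilde F$-cone; translating back by $\om_0 E^F_{n+1}$ precisely characterizes $\Si$ as an anisotropic $\om_0$-capillary Wulff shape in $\overline{\mbR^{n+1}_+}$. The main obstacle is the careful bookkeeping around the constant vector $E^F_{n+1}$, namely verifying via \eqref{eq-E-F} that the capillary inequality $\om \geq \om_0$ translates cleanly into the cone condition \eqref{condi-convexcone-ani}, and identifying the asymptotic volume ratio of the translated Wulff shape in the half-space with $\abs{\mcW^F_{1,\om_0}}$.
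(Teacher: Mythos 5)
Your proposal is correct and follows essentially the route the paper sketches in the introduction: apply Theorem \ref{Thm-aniso-convex} to $K=\overline{\mbR^{n+1}_+}$ with the shifted norm $\tilde F=F+\om_0\<\cdot,E^F_{n+1}\>$, using the identities $A_{\tilde F}=A_F$, $\nu_{\tilde F}=\nu_F+\om_0 E^F_{n+1}$, and $\mcW^{\tilde F}=\mcW^F_1(\om_0 E^F_{n+1})$ from Proposition \ref{Prop-tilde-F}. One small clarification: the equality ${\rm AVR}_{\tilde F}(K)\abs{\mcW^{\tilde F}}=\abs{\mcW^{\tilde F}\cap\mbR^{n+1}_+}$ is not really a limiting statement where ``the translation becomes negligible''; since $K$ is a cone, $\frac{\abs{\mcW^{\tilde F}_R\cap K}}{R^{n+1}}=\abs{\mcW^{\tilde F}\cap K}$ holds exactly for every $R>0$, and Proposition \ref{Prop-tangent-cone-infinity} gives this directly.
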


In fact, as already observed in De Phillipis-Maggi's work \cite{DePM15}, anisotropic capillary problems with respect to $F$ can be regarded as anisotropic free boundary problems with respect to another Minkowski norm $\tilde{F}$ in the half-space.
Hypersurfaces with the aforementioned special boundary conditions naturally arise from Calculus of Variations and are nowadays of particular interest.
We refer the readers to \cite{JWXZ23} and the references therein for a short historical introduction.

For the specialty of the half-space, we could in fact prove Theorem \ref{Thm-aniso-convex} in the half-space case in an alternative way.
This is done by a geometric observation on the Gauss image of $\S$, stated in Proposition \ref{Prop-Gauss-map}.
Once this is established, the rest of the proof of the Willmore inequality in the half-space then follows from the classical argument based on an area formula.
{We also mention that by a similar approach, we get another version of the Willmore-type inequality, which we state in Theorem \ref{Thm-aniso-halfspace-'} (see in particular \eqref{ineq-willmore-aniso-halfspace-'}).
This version has been obtained by Fusco-Morini \cite{FM23}*{Theorem 3.9} in the isotropic case. }

\

\subsection*{Outline of the paper}
The rest of the paper is organized as follows. 
In Section \ref{Sec-2}, we collect basic knowledges on Minkowski norm, anisotropic geometry, and anisotropic capillary hypersurfaces.
In Section \ref{Sec-3}, we prove Theorem \ref{Thm-iso-convex}.
In Section \ref{Sec-4}, we give an alternative proof for Theorem \ref{Thm-iso-convex} in the half-space case and then apply it to anisotropic capillary hypersurfaces.

\

\subsection*{Acknowledgements}
We thank Prof. Nicola Fusco and Prof. Massimiliano Morini for informing their work \cite{FM23} on a similar Willmore-type inequality which holds \textit{outside} unbounded convex sets.
We also thank the referee for the comments which help improve the exposition of the paper.

C.X. is supported by NSFC (Grant No. 12271449, 12126102) and the Natural Science Foundation of Fujian Province of China (Grant No. 2024J011008).
 X.J. is supported by National Natural Science Foundation of China
(Grant No. 12401249), Natural Science Foundation of Jiangsu Province, China (Grant No. BK20241258).

\

%==============

\section{Preliminaries}\label{Sec-2}

%------

\subsection*{Notations}

The Euclidean metric, scalar product, and Levi-Civita connection of the Euclidean space $\mbR^{n+1}$ are denoted respectively by $g_{\rm euc},\left<\cdot,\cdot\right>$, and $D$.
When considering the topology of $\mbR^{n+1}$, we adopt the following notations:
for a set $E\subset\mbR^{n+1}$,
we denote
by $\overline E$ the topological closure of $E$, by ${\rm int}(E)$ the topological interior of $E$, and by $\p E$ the topological boundary of $E$ in $\mbR^{n+1}$.
Regarding the use of the symbol $\abs{\cdot}$,
if we plug in a vector $e\in\mbR^{n+1},$ then $\abs{e}$ denotes the Euclidean length of $e$.
If we plug in a $k$-dimensional submanifold $M\subset\mbR^{n+1}$, then we write
\eq{
\abs{M}
\coloneqq\mathcal{H}^k(M),
}
where $\mathcal{H}^k$ is the $k$-dimensional Hausdorff measure in $\mbR^{n+1}$.
In particular, if we plug in an open set $\Om$ of $\mbR^{n+1}$, then we mean
\eq{
\abs{\Om}
\coloneqq\mathcal{L}^{n+1}(\Om).
}
To avoid ambiguity, we also use ${\rm Vol}(\cdot)$ to denote the $\mcL^{n+1}$-measure of certain sets in $\mbR^{n+1}$, see for example \eqref{eq-avr-halfspace-ani}.

%-----

\subsection{Minkowski norm and anisotropic geometry}
Let $F: \mathbb{S}^n\to \mathbb{R}_+$ be a smooth positive function on the standard sphere $(\mathbb{S}^n,\sigma)$ such that 
\eq{\label{convexity condition}
{
A_F{\mid_x}[z]
\coloneqq\nabla^2 F{\mid_x}[z]+F(x)z,\quad\text{ for every }x\in\mbS^n, z\in T_x\mbS^n
}
}
{is positive definite},
where $\na$ denotes the Levi-Civita connection associated to $\sigma$.
A \textit{Minkowski norm} is the one homogeneous extension of any such $F$ to the whole $\mbR^{n+1}$, namely,
$F(\xi)=|\xi|F(\frac{\xi}{|\xi|})$ for $\xi\neq 0$ and $F(0)=0$.
Note that condition \eqref{convexity condition} is equivalent to saying that $\frac12F^2$ is uniformly convex, in the sense that
\eq{
D^2(\frac12F^2)(\xi)>0,\quad\forall \xi\in\mbR^{n+1}{\setminus\{0\}}.
}

Let $\Phi$ be the \textit{Cahn-Hoffman map} associated to $F$, which is given by
\eq{
\Phi(z)=DF(z)=\nabla F(z)+F(z)z, \quad \forall z \in \mathbb{S}^n.
}
The image $\Phi(\mathbb{S}^n)$ of $\Phi$  is called (unit) Wulff shape.
The \textit{dual Minkowski norm} of $F$, denoted by $F^o$, is given by
\eq{\label{defn-dual}
    F^o(x)=\sup\left\{\frac{\left<x,z\right>}{F(z)}\Big| \, z\in\mbS^n\right\}.
}

We collect some well-known facts on $F$, $F^o$, and $\Phi$, see e.g., \cite{JWXZ23}*{Proposition 2.1}.
\begin{proposition}\label{basic-F} For any $z\in \mbR^{n+1}\setminus\{0\}$
the following statements hold for an Minkowski norm.
\begin{itemize}
    \item[(i)] $F^o(tz)= t F^o(z)$ , for any $t>0$.
    \item[(ii)] $F^o(x+y)\leq F^o(x)+F^o(y)$, for $x, y \in \mbR^{n+1}$.
\item[(iii)] $\left<\Phi(z),z\right>=F(z)$.
\item[(iv)] $F^o(\Phi(z))=1$.
\item[(v)] The following Cauchy-Schwarz inequality holds:
\begin{align}\label{Cau-Sch}\<z, \xi\>\le F^o(z)F(\xi), \quad \forall \xi  \in \mbR^{n+1}.
\end{align}
\item[(vi)]  The  Wulff shape $\Phi(\mathbb{S}^n)=\{x\in\mbR^{n+1}|F^o(x)=1\}$.
\end{itemize}
\end{proposition}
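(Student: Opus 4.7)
The plan is to verify the six properties largely in the listed order, leveraging the one-homogeneous extension of $F$ and the convexity condition \eqref{convexity condition}. Recall that by one-homogeneity, the Cahn--Hoffman map coincides with the gradient $DF$ on $\mbR^{n+1}\setminus\{0\}$, and Euler's identity gives $\langle DF(z),z\rangle=F(z)$.

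First I would establish (iii) directly: since $\nabla F(z)$ is tangent to $\mbS^n$ at $z$, the term $\langle \nabla F(z),z\rangle$ vanishes, so
\eq{
\langle \Phi(z),z\rangle=\langle \nabla F(z)+F(z)z,z\rangle=F(z)\abs{z}^2=F(z).
}
Next, (i) and (ii) are immediate from the supremum form of the definition \eqref{defn-dual} of $F^o$: for $t>0$, pulling the factor $t$ out of $\langle tx,z\rangle$ yields $F^o(tx)=tF^o(x)$; for the sum, $\langle x+y,z\rangle/F(z)\leq\langle x,z\rangle/F(z)+\langle y,z\rangle/F(z)$ pointwise, and passing to the sup gives subadditivity.

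For the Cauchy--Schwarz inequality (v), I would use the homogeneity of $F$ to reduce to the case $\xi\in\mbS^n$: writing $\xi=\abs{\xi}\hat\xi$ with $\hat\xi\in\mbS^n$, the inequality $\langle z,\xi\rangle\leq F^o(z)F(\xi)$ reduces to $\langle z,\hat\xi\rangle/F(\hat\xi)\leq F^o(z)$, which holds for $z\in\mbS^n$ by definition, and then extends to general $z$ via homogeneity (i). Property (iv) then splits into two bounds: the lower bound $F^o(\Phi(z))\geq 1$ comes from plugging $w=z$ into the sup and invoking (iii); the upper bound $\langle \Phi(z),w\rangle\leq F(w)$ for $w\in\mbS^n$ is exactly the supporting hyperplane inequality for the convex function $F$ at $z$, namely $F(w)\geq F(z)+\langle DF(z),w-z\rangle=\langle\Phi(z),w\rangle$ after cancelling $F(z)=\langle\Phi(z),z\rangle$. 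The convexity of $F$ here is a consequence of \eqref{convexity condition}.

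Finally for (vi), one inclusion $\Phi(\mbS^n)\subseteq\{F^o=1\}$ is just (iv). For the reverse, given $x$ with $F^o(x)=1$, by compactness of $\mbS^n$ the supremum in \eqref{defn-dual} is attained at some $z_\ast\in\mbS^n$. Rewriting via homogeneity, $F^o(x)=\sup\{\langle x,w\rangle:F(w)=1\}$, and the Lagrange multiplier condition at a maximizer $w_\ast=z_\ast/F(z_\ast)$ gives $x=\lambda\,DF(w_\ast)$ with $\lambda=\langle x,w_\ast\rangle=F^o(x)=1$; since $DF$ is zero-homogeneous, $DF(w_\ast)=DF(z_\ast)=\Phi(z_\ast)$, so $x=\Phi(z_\ast)$. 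I expect the main (small) obstacle to be this surjectivity step in (vi); the convexity assumption \eqref{convexity condition} is what guarantees the maximizer exists and that the duality is well behaved, and the rest is a clean Lagrange-multiplier computation.
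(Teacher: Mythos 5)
Your argument is correct, and all six items are handled by the standard route: Euler's identity and tangency of $\nabla F$ for (iii), direct manipulation of the supremum in \eqref{defn-dual} for (i), (ii) and (v), the supporting-hyperplane inequality for the convex one-homogeneous extension of $F$ for the upper bound in (iv), and a Lagrange-multiplier/attainment argument for the surjectivity in (vi). The paper itself offers no proof to compare against — it quotes these as known facts from \cite{JWXZ23}*{Proposition 2.1} — so there is no divergence to report; the only cosmetic remark is that in (v) the reduction to $z\in\mbS^n$ is unnecessary, since \eqref{defn-dual} defines $F^o(z)$ for arbitrary $z$ and already yields $\langle z,\hat\xi\rangle\le F^o(z)F(\hat\xi)$ directly.
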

We denote the Wulff ball of radius $r$ and centered at $x_0\in\mbR^{n+1}$  by
\eq{
\mcW_r^F(x_0)
=\{x\in\mbR^{n+1}|F^o(x-x_0)<r\},
}
and the corresponding Wulff shape $\p\mcW_r^F(x_0)$ is given by
\eq{
\p\mcW_r^F(x_0)
=\{x\in\mbR^{n+1}|F^o(x-x_0)=r\}.
}
We also use $\mcW^F$ and $\mcW^F_R$ to abbreviate  $\mcW^F_1(0)$ and $\mcW^F_R(0)$.

Let $\S\subset\mbR^{n+1}$ be a $C^2$-hypersurface and $\nu$  a unit normal field of $\S$.
The \textit{anisotropic normal} of $\S$ with respect to $\nu$  and $F$ is given by
{(see e.g., \cite{Winklmann06})
}
\eq{
\nu_F=\Phi(\nu)=\nabla F(\nu)+F(\nu)\nu,
}
and the \textit{anisotropic principal curvatures} $\{\kappa_i^F\}_{i=1}^n$ of $\S$ with respect to $\nu$ and $F$ are given by the eigenvalues of the \textit{anisotropic Weingarten map} $$\rd\nu_F=A_F(\nu)\circ\rd\nu: T_p\S\to T_p\S.$$ The eigenvalues are real since $A_F$ is positive definite and symmetric.
Let $$H^F=\frac 1 n \sum\limits_{i=1}^n\kappa_i^F \quad \hbox{ and } \quad H^F_n=\prod\limits_{i=1}^n\kappa_i^F$$ denote the normalized \textit{anisotropic mean curvature} and the \textit{anisotropic Gauss-Kronecker curvature} of $\Sigma$ respectively.
It is easy to check that the anisotropic principal curvatures of $\p\mcW^F_r(x_0)$ are $\frac1r$, since
\eq{\label{normal}
    \nu_F(x)=\frac{x-x_0}{r},  \quad\hbox{ on } \p\mcW^F_r(x_0).
}
{For a proof of this fact, see e.g., \cite{JWXZ23}*{(2.3)}.
}
We record
the following very useful anisotropic angle comparison principle.
\begin{proposition}[{\cite{JWXZ23}*{Proposition 3.1}}]\label{Prop-angle-compare}
Let $x,z\in\mbS^n$ be two distinct points and $y\in\mbS^n$ lie in a length-minimizing geodesic joining $x$ and $z$ in $\mbS^n$,
then we have \eq{
\left<\Phi(x),z\right>\leq\left<\Phi(y),z\right>.
}
Equality holds if and only if $x=y$.
\end{proposition}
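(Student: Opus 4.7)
The plan is to parametrize a length-minimizing geodesic $\gamma:[0,L]\to\mbS^n$ from $x$ to $z$ with $L=d_{\mbS^n}(x,z)\in(0,\pi]$ and study the scalar function $h(t)=\<\Phi(\gamma(t)),z\>$; the desired inequality $\<\Phi(x),z\>\le\<\Phi(y),z\>$ for $y=\gamma(t_0)$ reduces to showing $h'\ge 0$ on $[0,L]$. Extending $F$ to be $1$-homogeneous on $\mbR^{n+1}\setminus\{0\}$ so that $\Phi=DF$, I write $\gamma(t)=\cos(t)\,x+\sin(t)\,v$ with $v\in T_x\mbS^n$ the unit tangent chosen so that $\gamma(L)=z$. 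Since the geodesic lies in the $2$-plane spanned by $x$ and $z$, the angle-addition formulas yield the clean decomposition
\[ z=\cos(L-t)\gamma(t)+\sin(L-t)\gamma'(t),\quad t\in[0,L]. \]

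Because $DF$ is $0$-homogeneous, Euler's identity forces $D^2F(\gamma(t))[\cdot,\gamma(t)]\equiv 0$, so differentiating $h$ and plugging in the decomposition gives
\[ h'(t)=D^2F(\gamma(t))[\gamma'(t),z]=\sin(L-t)\,D^2F(\gamma(t))[\gamma'(t),\gamma'(t)]. \]
The next step is to identify the restriction of $D^2F(x)$ to $T_x\mbS^n\times T_x\mbS^n$ with the bilinear form $A_F|_x=\nabla^2 F(x)+F(x)\sigma$ appearing in the convexity hypothesis. This follows from a short Gauss-formula computation: from $\bar\nabla_u V=\nabla_u V-\<u,V\>x$ on $\mbS^n$ combined with Euler's relation $DF(x)\cdot x=F(x)$, one obtains $D^2F(x)[u,V]=\nabla^2 F(x)[u,V]+F(x)\<u,V\>$ for tangent vectors $u,V$. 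Hence
\[ h'(t)=\sin(L-t)\,A_F|_{\gamma(t)}[\gamma'(t),\gamma'(t)]\ge 0,\quad t\in[0,L], \]
because $\sin(L-t)\ge 0$ on $[0,L]\subset[0,\pi]$ and $A_F$ is positive definite by hypothesis.

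Writing $y=\gamma(t_0)$ and integrating from $0$ to $t_0$ then yields $\<\Phi(y),z\>-\<\Phi(x),z\>=h(t_0)-h(0)\ge 0$. For the equality case, note that $A_F|_{\gamma(s)}[\gamma'(s),\gamma'(s)]>0$ strictly since $\gamma'(s)\ne 0$, and $\sin(L-s)>0$ on the open interval $(0,L)$ (including $L=\pi$, where $\sin(\pi-s)=\sin s>0$ on $(0,\pi)$). The integrand is therefore strictly positive on $(0,t_0)$ whenever $t_0>0$, so $h(t_0)=h(0)$ forces $t_0=0$, i.e., $y=x$. I do not anticipate a serious obstacle in this proof: the only mildly technical step is the identification $D^2F|_{T_x\mbS^n}=A_F|_x$, which is a standard short computation, and the rest is a straightforward monotonicity argument along the geodesic.
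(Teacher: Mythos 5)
Your proof is correct: the decomposition $z=\cos(L-t)\gamma(t)+\sin(L-t)\gamma'(t)$, the vanishing of $D^2F$ in the radial direction, and the identification of $D^2F\vert_{T\mbS^n}$ with $A_F$ are all verified, and the monotonicity of $h(t)=\<\Phi(\gamma(t)),z\>$ together with strict positivity of the integrand on $(0,t_0)$ gives both the inequality and the equality case. The paper itself gives no proof (it quotes \cite{JWXZ23}*{Proposition 3.1}), and your argument is essentially the standard one from that reference, so nothing further is needed.
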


%------
\subsection{Anisotropic capillary hypersurfaces in a half-space}

Let us first recall the definition of anisotropic capillary hypersurface in the half-space.
\begin{definition}
\normalfont
Given $\om_0\in\left(-F(E_{n+1}),F(-E_{n+1})\right)$, any hypersurface $\S\subset\overline{\mbR^{n+1}_+}$ is said to be \textit{anisotropic $\om_0$-capillary} in $\overline{\mbR^{n+1}_+}$ if it intersects $\p\mbR^{n+1}_+$ transversally with
\eq{
\left<\nu_F(x),-E_{n+1}\right>\equiv\om_0,\quad\forall x\in\p\S.
}
In particular, $\S$ is said to have \textit{anisotropic free boundary}  if it is anisotropic $\om_0$-capillary with $\om_0=0$.
\end{definition}
Here we record some facts concerning anisotropic capillary hypersurfaces in the half-space.

\begin{proposition}[\cite{JWXZ23}*{Remark 2.1}]\label{Prop-JWXZ23-Rem2.1}
Let $\S$ be a hypersurface in $\overline{\mbR^{n+1}_+}$ which meets $\p\mbR^{n+1}_+$ transversally
and define a function on $\partial\Sigma$ by $\om(x)\coloneqq\left<\nu_F(x),-E_{n+1}\right>$. Then for any $x\in\p\S$,
\eq{
\om(x)\in\left(-F(E_{n+1}),F(-E_{n+1})\right).
}
\end{proposition}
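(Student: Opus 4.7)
The plan is to derive both bounds from the anisotropic Cauchy--Schwarz inequality of Proposition \ref{basic-F}(v), and then to upgrade them to strict inequalities using the transversality hypothesis.

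For the upper bound $\om(x)<F(-E_{n+1})$, I would apply Proposition \ref{basic-F}(v) with $z=\nu_F(x)=\Phi(\nu(x))$ and $\xi=-E_{n+1}$. Since $F^o(\Phi(\nu(x)))=1$ by Proposition \ref{basic-F}(iv), this yields at once
\eq{
\om(x)=\langle \Phi(\nu(x)),-E_{n+1}\rangle \leq F^o(\Phi(\nu(x)))\,F(-E_{n+1})=F(-E_{n+1}).
}
The same argument with $\xi=E_{n+1}$ gives $-\om(x)=\langle \Phi(\nu(x)),E_{n+1}\rangle\leq F(E_{n+1})$, which is the non-strict lower bound $\om(x)\geq -F(E_{n+1})$.

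The real task is to rule out equality in either bound. For this I would invoke Proposition \ref{Prop-angle-compare}: for any $p\in\p\S$ and any $z\in\mbS^n$, applying that proposition with its $x$-variable equal to $\nu(p)$ and $y=z$ (the endpoint of the length-minimizing geodesic from $\nu(p)$ to $z$) gives
\eq{
\langle\Phi(\nu(p)),z\rangle \leq \langle\Phi(z),z\rangle = F(z),
}
with equality if and only if $\nu(p)=z$. Hence equality in the upper (resp.\ lower) bound would force $\nu(p)=-E_{n+1}$ (resp.\ $\nu(p)=E_{n+1}$), so $\nu(p)$ would be parallel to $E_{n+1}$ and $T_p\S$ would coincide with $T_p\p\mbR^{n+1}_+$, contradicting the transversality of $\S$ and $\p\mbR^{n+1}_+$ at $p$.

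I expect the only subtle point to be this equality analysis. An equivalent route I would keep in reserve is to use the strict convexity of $\mcW^F$ (ensured by positive definiteness of $A_F$): the linear functional $\langle\,\cdot\,,-E_{n+1}\rangle$ attains its maximum over $\overline{\mcW^F}$ at the unique point $\Phi(-E_{n+1})$, so equality in Cauchy--Schwarz again pins down $\nu(p)=-E_{n+1}$, with the analogous conclusion for the lower bound.
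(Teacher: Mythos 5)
Your proof is correct. The non-strict bounds $-F(E_{n+1})\le\om(x)\le F(-E_{n+1})$ follow exactly as you say from Proposition~\ref{basic-F}(iv),(v), and the passage to strict inequality via Proposition~\ref{Prop-angle-compare} (or, equivalently, via strict convexity of $\mcW^F$, which gives that $\langle\cdot,-E_{n+1}\rangle$ is maximized over $\overline{\mcW^F}$ at the single point $\Phi(-E_{n+1})$) correctly pins down $\nu(p)=\pm E_{n+1}$, which is precisely what transversality forbids, since for two hypersurfaces in $\mbR^{n+1}$ transversality at $p$ is equivalent to $T_p\S\neq T_p\p\mbR^{n+1}_+$. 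Note that the paper itself cites this statement from \cite{JWXZ23}*{Remark 2.1} without reproducing a proof, so there is no internal argument to compare against; your argument is the natural one using only the tools recalled in Section~\ref{Sec-2}.
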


Define a constant vector $E^{F}_{n+1}\in\mathbb{R}^{n+1}$ as in \cite{JWXZ23},
\eq{\label{eq-E-F}
E^{F}_{n+1}
=\left\{
\begin{array}{lrl}  -\frac{\Phi(-E_{n+1})}{F(-E_{n+1})}, \quad &\text{if } \om_0>0,\\
\frac{\Phi(E_{n+1})}{F(E_{n+1})}, \quad &\text{if } \om_0<0,\\
 E_{n+1},\quad &\text{if } \om_0=0,\\
 %\frac{\Phi(E_{n+1})}{F(E_{n+1})} \quad &\text{if } \om_0<0.
  %
\end{array}
\right.
}
whose definition is strongly related to the Cauchy-Schwarz inequality.
When $\om_0=0$, one can also define $E_{n+1}^{F}$ as $\frac{\Phi(E_{n+1})}{F(E_{n+1})}$ or $-\frac{\Phi(-E_{n+1})}{F(-E_{n+1})}$.
Note that $\<E^{F}_{n+1}, E_{n+1}\>=1$, and when $F$ is the Euclidean norm, $E^{F}_{n+1}$ is indeed $E_{n+1}$.

\begin{proposition}[{\cite{JWXZ23}*{Proposition 3.2}}]\label{Prop-non-negative-aniso-halfspace}
For $\om_0\in\left(-F(E_{n+1}),F(-E_{n+1})\right)$, there holds
\begin{align}\label{ineq-non-negative-aniso-halfspace}
F(z)+\om_0\<z,E^F_{n+1}\>>0,\quad\text{for any }z\in\mbS^n.
\end{align}
\end{proposition}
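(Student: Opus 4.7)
The plan is to prove the inequality by splitting into three cases according to the sign of $\om_0$, matching the piecewise definition of $E^F_{n+1}$ in \eqref{eq-E-F}. The only nontrivial tool needed in each nonzero case is the anisotropic Cauchy--Schwarz inequality \eqref{Cau-Sch} from Proposition \ref{basic-F}(v), paired with the dual normalization $F^o(\Phi(y)) = 1$ from Proposition \ref{basic-F}(iv). The case $\om_0 = 0$ is immediate, since the inequality reduces to the pointwise positivity $F(z) > 0$ of a Minkowski norm on $\mbS^n$.

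For $\om_0 > 0$, I would substitute $E^F_{n+1} = -\Phi(-E_{n+1})/F(-E_{n+1})$ into the left-hand side, which makes the inner product $\<z, \Phi(-E_{n+1})\>$ appear with a negative coefficient. Applying Cauchy--Schwarz in the form
\[
\<z, \Phi(-E_{n+1})\> = \<\Phi(-E_{n+1}), z\> \le F^o(\Phi(-E_{n+1})) F(z) = F(z)
\]
then yields the lower bound
\[
F(z) + \om_0 \<z, E^F_{n+1}\> \ge \frac{F(-E_{n+1}) - \om_0}{F(-E_{n+1})} F(z),
\]
whose right-hand side is strictly positive thanks to the hypothesis $\om_0 < F(-E_{n+1})$ and the positivity of $F$ on $\mbS^n$.

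The case $\om_0 < 0$ I would handle symmetrically, substituting $E^F_{n+1} = \Phi(E_{n+1})/F(E_{n+1})$ and invoking the companion Cauchy--Schwarz bound $\<\Phi(E_{n+1}), z\> \le F(z)$ to extract the positive factor $(F(E_{n+1}) + \om_0)/F(E_{n+1})$, which is positive by the hypothesis $\om_0 > -F(E_{n+1})$. No single step is genuinely obstructive; the only delicate point is the sign bookkeeping in this last case, where the Cauchy--Schwarz inequality is multiplied by the negative scalar $\om_0/F(E_{n+1})$, so one must verify that this reversal still leaves the direction of the resulting estimate correct before concluding.
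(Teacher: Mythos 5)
Your proof is correct. The paper itself does not prove Proposition \ref{Prop-non-negative-aniso-halfspace} but cites it from \cite{JWXZ23}*{Proposition 3.2}; the argument there is the same Cauchy--Schwarz argument you give, since the definition \eqref{eq-E-F} of $E^F_{n+1}$ is tailor-made for exactly this. One very small point of exposition: you single out the case $\om_0<0$ as the one where the Cauchy--Schwarz estimate gets multiplied by a negative scalar, but this happens in the case $\om_0>0$ as well (the coefficient of $\<z,\Phi(-E_{n+1})\>$ there is $-\om_0/F(-E_{n+1})<0$), so both nonzero cases involve an inequality reversal; your computation handles both correctly, it is only the remark about ``the last case'' that is slightly misleading.
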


We have the following integral formula.
\begin{lemma}
Let $\S$ be a compact, embedded $C^2$-hypersurface in $\overline{\mbR^{n+1}_+}$ which intersects $\p\mbR^{n+1}_+$ transversally.
Denote by $\Om$ the bounded domain enclosed by $\S$ and $\p \mbR^{n+1}_+$.
Then
there holds
\eq{\label{formu-om_0-E^F}
\int_\S\<\nu,E_{n+1}^F\> \rd A
=\int_{\p\Om\cap\p\mbR^{n+1}_+} \rd A.
}
\end{lemma}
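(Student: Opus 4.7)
The plan is to apply the divergence theorem to the constant vector field $X = E^F_{n+1}$ on the bounded region $\Omega$. Since $E^F_{n+1}$ is constant, $\div X = 0$, and so
\eq{
0 = \int_\Omega \div E^F_{n+1}\, \rd V = \int_{\p\Omega} \<E^F_{n+1}, \nu_{\p\Omega}\> \rd A,
}
where $\nu_{\p\Omega}$ denotes the outward unit normal of $\Omega$.

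Next I would split $\p\Omega$ into its two natural pieces. Because $\Sigma$ is compact, embedded, and intersects $\p\mbR^{n+1}_+$ transversally, the topological boundary decomposes (up to a lower-dimensional set) as $\p\Omega = \Sigma \cup (\p\Omega\cap\p\mbR^{n+1}_+)$. By the definition given in the theorem, the outward unit normal to $\Sigma$ with respect to $\Omega$ is $\nu$, while the outward unit normal of $\Omega$ on $\p\Omega\cap\p\mbR^{n+1}_+\subset\{x_{n+1}=0\}$ is $-E_{n+1}$. Substituting these into the boundary integral yields
\eq{
\int_\Sigma \<\nu, E^F_{n+1}\> \rd A + \int_{\p\Omega\cap\p\mbR^{n+1}_+} \<-E_{n+1}, E^F_{n+1}\> \rd A = 0.
}

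Finally, I would invoke the identity $\<E^F_{n+1}, E_{n+1}\> = 1$, which is recorded immediately after the definition \eqref{eq-E-F} of $E^F_{n+1}$ and holds in all three cases of the definition (it follows from $\<\Phi(z), z\> = F(z)$ given in Proposition \ref{basic-F}(iii) applied to $z=\pm E_{n+1}$). Rearranging then gives exactly
\eq{
\int_\Sigma \<\nu, E^F_{n+1}\> \rd A = \int_{\p\Omega\cap\p\mbR^{n+1}_+} \rd A,
}
as claimed. There is no genuine obstacle here: the only subtlety is to ensure that the transversal intersection guarantees that $\p\Omega\cap\p\mbR^{n+1}_+$ is an $(n+1)$-dimensional Lipschitz boundary piece so that the divergence theorem applies, which is immediate from the transversality hypothesis and the $C^2$ regularity of $\Sigma$.
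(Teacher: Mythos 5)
Your proof is correct and follows exactly the same approach as the paper: apply the divergence theorem to the constant vector field $E^F_{n+1}$ on $\Omega$, split $\partial\Omega$ into $\Sigma$ and $\partial\Omega\cap\partial\mbR^{n+1}_+$, and use $\langle E_{n+1},E^F_{n+1}\rangle=1$. (One small slip at the end: $\partial\Omega\cap\partial\mbR^{n+1}_+$ is an $n$-dimensional piece of the boundary, not $(n+1)$-dimensional.)
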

\begin{proof}
Notice that ${\rm div}\left(E_{n+1}^F\right)=0$.
Integrating this over $\Om$, using integration by parts, then invoking the fact that $\left<E_{n+1},E^F_{n+1}\right>=1$, we obtain the assertion.
\end{proof}

As mentioned in the introduction, we shall interpret the anisotropic capillary problem as an anisotropic free boundary problem.
This is done by introducing the following Minkowski norm.
\begin{proposition}\label{Prop-tilde-F}
Given $\om_0\in\left(-F(E_{n+1}),F(-E_{n+1})\right)$.
Let $\S$ be a compact, embedded $C^2$ $\om_0$-capillary hypersurface in $\overline{\mbR^{n+1}_+}$.
Then $\S$ is  an anisotropic free boundary hypersurface in $\mbR^{n+1}_+$ with respect to the Minkowski norm $\tilde F$, defined by
\eq{\label{defn-tilde-F}
\tilde F(\xi)
\coloneqq F(\xi)+\om_0\<\xi,E_{n+1}^F\>,\quad\forall\xi\in\mbR^{n+1}.
}
Moreover, 
\begin{itemize}
    \item $A_F=A_{\tilde F}$ on $\Sigma$, and hence the anisotropic curvatures of $\Sigma$ w.r.t. to $F$ and $\tilde F$ are the same.
    \item  The unit Wulff shapes of associated to $F$ and $\bar F$ are the same up to a translation, precisely, $$\p\mcW^{\tilde F}=\p\mcW_1^F(\om_0E_{n+1}^F).$$
\end{itemize}
\end{proposition}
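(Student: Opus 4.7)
The plan is to exploit a single elementary fact: $\tilde F-F$ is a linear function, and the restriction of a linear function to $\mbS^n$ contributes nothing to the operator $A_F$ introduced in \eqref{convexity condition}. Granting this, all four assertions reduce to direct computations, so I would proceed assertion by assertion.

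First I would check that $\tilde F$ is a Minkowski norm. Positive $1$-homogeneity follows immediately from that of $F$ and the linearity of $\xi\mapsto\<\xi,E_{n+1}^F\>$; positivity on $\mbS^n$ is exactly the content of Proposition \ref{Prop-non-negative-aniso-halfspace}. For the convexity condition \eqref{convexity condition}, I would use the fact that for the restriction to $\mbS^n$ of the linear function $\ell(x)=\<x,E_{n+1}^F\>$, a short geodesic computation gives $\nabla^2\ell=-\ell\,\sigma$, so that $A_\ell=\nabla^2\ell+\ell\,\sigma=0$ as an endomorphism of $T\mbS^n$. Consequently
\eq{
A_{\tilde F}=A_F+\omega_0 A_\ell=A_F\quad\text{on }\mbS^n,
}
which is positive definite. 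This identity holds on all of $\mbS^n$, so in particular the anisotropic Weingarten maps $\rd\nu_{\tilde F}=A_{\tilde F}(\nu)\circ\rd\nu$ and $\rd\nu_F=A_F(\nu)\circ\rd\nu$ agree along $\Sigma$, yielding the equality of the anisotropic principal curvatures (and of $H^F$ and $H^{\tilde F}$).

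Next I would turn to the free boundary condition. Differentiating $\tilde F$ at $\nu(x)$ along $\p\Sigma$, I obtain
\eq{
\tilde\nu_{\tilde F}(x)=D\tilde F(\nu(x))=DF(\nu(x))+\omega_0 E_{n+1}^F=\nu_F(x)+\omega_0 E_{n+1}^F.
}
Pairing with $-E_{n+1}$ and invoking the capillary condition $\<\nu_F,-E_{n+1}\>=\omega_0$ together with the identity $\<E_{n+1}^F,E_{n+1}\>=1$ recorded after \eqref{eq-E-F}, the two contributions $\omega_0$ and $-\omega_0$ cancel, giving $\<\tilde\nu_{\tilde F},-E_{n+1}\>\equiv 0$ on $\p\Sigma$, which is the anisotropic free boundary condition for $\tilde F$. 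The same translation identity $\tilde\Phi=\Phi+\omega_0 E_{n+1}^F$ applied on all of $\mbS^n$ yields $\p\mcW^{\tilde F}=\tilde\Phi(\mbS^n)=\Phi(\mbS^n)+\omega_0 E_{n+1}^F=\p\mcW_1^F(\omega_0 E_{n+1}^F)$, as claimed.

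I do not expect a serious obstacle; the only non-tautological input is the spherical Hessian identity $\nabla^2\ell=-\ell\,\sigma$ for a linear function, which is what guarantees that adding a linear term preserves the uniform convexity of the Minkowski norm. Everything else is a direct consequence of the defining formulas for $E_{n+1}^F$ and of the identity $D\tilde F=DF+\omega_0 E_{n+1}^F$.
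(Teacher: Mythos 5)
Your proposal is correct and follows essentially the same route as the paper: the free boundary condition via the cancellation $\<D\tilde F(\nu)-DF(\nu),-E_{n+1}\>=\om_0\<E_{n+1}^F,-E_{n+1}\>=-\om_0$, the invariance of the anisotropic Weingarten map because the added term is linear, and the translated Wulff shape $\p\mcW^{\tilde F}=D\tilde F(\mbS^n)=\p\mcW^F_1(\om_0E_{n+1}^F)$. The only cosmetic difference is that you verify $A_{\tilde F}=A_F$ intrinsically via the spherical Hessian identity $\nabla^2\ell=-\ell\,\sigma$ for the linear function $\ell(x)=\<x,E_{n+1}^F\>$, whereas the paper simply notes the ambient identity $D^2\tilde F=D^2F$; you also explicitly verify that $\tilde F$ is a Minkowski norm (positivity via Proposition \ref{Prop-non-negative-aniso-halfspace}), a point the paper leaves implicit.
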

\begin{proof}
By direct computation, we see that
\eq{\label{eq-DF-DtildeF}
D\tilde F(\xi)
=D F(\xi)+\om_0E_{n+1}^F.
}
It follows that for every $p\in\p\S\subset\p\mbR^{n+1}_+$, there holds
\eq{
\left<\nu_{\tilde F}(p),-E_{n+1}\right>
=\left<\nu_{F}(p)+\om_0E_{n+1}^F,-E_{n+1}\right>
=\om_0-\om_0=0,
}
where we have used the fact that $\left<E_{n+1},E^F_{n+1}\right>=1$.
This proves the first part of the assertion.

It is direct to see that $A_F(p)=A_{\tilde F}(p)$ for any $p\in\S$, since we have
\eq{
D^2\tilde F(\xi)
=D^2F(\xi).
}
The last part of the assertion follows from \eqref{eq-DF-DtildeF}, and the fact that the unit Wulff shape with origin as its center could be characterized by $\p\mcW^{\tilde F}=D\tilde F(\mbS^n)$.
\end{proof}

Since  $F$ is in general not an even function on $\mbS^n$, for later purpose, we need the following  Minkowski norm. Define $F_\ast$  by 
$$F_{\ast}(z)=F(-z), \quad z\in\mbS^n.$$
Geometrically speaking, $F_\ast$ is induced by the convex body 
{obtained by applying a central symmetric to $\mcW^F$.}
%which is centrally symmetric to $\mcW^F$.
It is clear that $F_\ast$ also induces a smooth Minkowski norm, still denoted by $F_\ast$, and we 
denote by $\Phi_\ast$ and $F_\ast^o$ the Cahn-Hoffman map and the dual Minkowski norm associated to $F_\ast$. 
\begin{proposition}\label{Prop-F-F_ast}
There holds
\begin{enumerate}
    \item \eq{\label{eq-F^o-F^o_ast}
    F_\ast^o(x)&=F^o(-x),\quad\forall x\in \mbR^{n+1}.
    }
    \item
    \eq{\label{eq-Phi-Phi_ast}
\Phi_\ast(z)&=-\Phi(-z),\quad \forall z\in\mbS^n.
}
     \item  For any $x\in\S$,
     let $\{\kappa_i^F(x)\}_{i=1}^n$ denote the  anisotropic principal curvatures of $\S$ at $x$ with respect to $\nu$ and $F$.
     Then $\{-\kappa_i^F(x)\}_{i=1}^n$
     are the anisotropic principal curvatures of $\S$ at $x$ with respect to the unit inner normal $-\nu$ and $F_\ast$,  which we denote by $\{\kappa_i^{F_\ast}(x)\}_{i=1}^n$.
\end{enumerate}
\end{proposition}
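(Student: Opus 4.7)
The overall strategy is to exploit the single defining relation $F_\ast(z)=F(-z)$, i.e.\ $F_\ast=F\circ(-{\rm id})$, and to transport each statement through the antipodal map. Since $F$ (and hence $F_\ast$) is one-homogeneously extended to $\mbR^{n+1}$, the identity $F_\ast=F\circ(-{\rm id})$ holds on all of $\mbR^{n+1}$, and differentiating it by the chain rule will produce the sign flips needed in (2) and (3). Parts (1) and (2) are essentially unwinding of definitions, while (3) requires the observation that the anisotropic Weingarten map coincides with the differential of the anisotropic normal, which makes the sign book-keeping transparent.

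For (1), I would start from the definition \eqref{defn-dual}:
\eq{
F_\ast^o(x)=\sup_{z\in\mbS^n}\frac{\<x,z\>}{F_\ast(z)}=\sup_{z\in\mbS^n}\frac{\<x,z\>}{F(-z)},
}
and then substitute $w=-z$. Since $z\mapsto -z$ is a diffeomorphism of $\mbS^n$, the supremum equals $\sup_{w\in\mbS^n}\frac{\<-x,w\>}{F(w)}=F^o(-x)$, giving (1). For (2), I would use the one-homogeneous extension of $F_\ast$, which satisfies $F_\ast(\xi)=F(-\xi)$ for all $\xi\in\mbR^{n+1}$, and apply the chain rule to get $DF_\ast(\xi)=-DF(-\xi)$. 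Restricting to $z\in\mbS^n$ and using $\Phi=DF$, $\Phi_\ast=DF_\ast$ yields $\Phi_\ast(z)=-\Phi(-z)$.

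For (3), the key observation is that along $\S$ the anisotropic normal with respect to $-\nu$ and $F_\ast$ equals
\eq{
\Phi_\ast(-\nu)=-\Phi(-(-\nu))=-\Phi(\nu)=-\nu_F,
}
by part (2). Differentiating along $\S$ gives that the anisotropic Weingarten endomorphism associated with the pair $(-\nu,F_\ast)$ is $\rd(\Phi_\ast\circ(-\nu))=-\rd\nu_F$, which is exactly the negative of the anisotropic Weingarten map associated with $(\nu,F)$. Consequently the eigenvalues change sign, giving $\kappa_i^{F_\ast}(x)=-\kappa_i^F(x)$. Strictly speaking, to identify this endomorphism with $A_{F_\ast}(-\nu)\circ \rd(-\nu)$ one uses that $DF_\ast$ restricted to $T_{(-\nu)}\mbS^n$ equals $A_{F_\ast}(-\nu)$ after the standard splitting, exactly as in the $F$-case recalled before \eqref{normal}.

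No step here looks substantively hard: everything reduces to the chain rule applied to $F_\ast=F\circ(-{\rm id})$ and to the fact that the antipodal map is a diffeomorphism of $\mbS^n$. The only point deserving a little care is (3), where one must check that the geometric quantities are read off from the correct endomorphism; I would therefore spend one or two lines making explicit that the anisotropic Weingarten map is the tangential differential of the anisotropic Gauss map, so that the sign flip in $\Phi_\ast(-\nu)=-\nu_F$ propagates cleanly to the curvatures.
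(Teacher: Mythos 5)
Your proposal is correct and takes essentially the same approach as the paper: part (1) by the substitution $w=-z$ in the definition of the dual norm, part (2) by differentiating $F_\ast=F\circ(-\mathrm{id})$, and part (3) by noting $\Phi_\ast(-\nu)=-\Phi(\nu)=-\nu_F$ and differentiating along $\Sigma$ to flip the sign of the Weingarten map (the paper phrases this in terms of principal directions $e_i$, but it is the same computation).
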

\begin{proof}
To prove \eqref{eq-F^o-F^o_ast}, we verify by definition.
Precisely, thanks to \eqref{defn-dual}, we have
\eq{
F^o_\ast(x)
=\sup\left\{\frac{\left<x,z\right>}{F_\ast(z)}\Big| \, z\in\mbS^n\right\}
=&\sup\left\{\frac{\left<x,z\right>}{F(-z)}\Big| \, z\in\mbS^n\right\}\\
=&\sup\left\{\frac{\left<-x,-z\right>}{F(-z)}\Big| \, -z\in\mbS^n\right\}
=F^o(-x).
}
\eqref{eq-Phi-Phi_ast} follows directly by
differentiating both sides of the equality $F_\ast(z)=F(-z)$.

To prove (3), we fix any $x\in\S$ and let $\{e_i(x)\}_{i=1}^n$ denote the anisotropic principal vector at $x\in\S$ corresponding to $\kappa_i^F$, that is,
\eq{
D_{e_i(x)}\Phi(\nu(x))
=\kappa^F_i(x)e_i(x).
}
Letting $z=-\nu(x)$ in \eqref{eq-Phi-Phi_ast} then
differentiating with respect to $e_i(x)$, we obtain
\eq{
D_{e_i(x)}\Phi_\ast(-\nu(x))
=-D_{e_i(x)}\Phi(\nu(x))
=-\kappa^F_i(x)e_i(x),
}
thereby showing that $\{e_i(x)\}_{i=1}^n$ are also the anisotropic principal vectors at $x\in\S$, corresponding to $\{\kappa_i^{F_\ast}(x)\}_{i=1}^n$, with
\eq{
\kappa_i^{F_\ast}(x)
=-\kappa_i^F(x),\quad\forall i=1,\ldots,n.
}
This completes the proof.
\end{proof}

We close this subsection by stating the following geometric result, which follows
simply by using integration by parts for the divergence of the position vector field.
\begin{lemma}\label{Lem-area-volume}
We have
\eq{\label{eq-area-volume}
\int_{\p\mcW^F\cap \mbR^{n+1}_+}F(\nu) \rd A
=(n+1)\abs{\mcW^F\cap \mbR^{n+1}_+}}
and
\eq{\label{eq2.9}
\int_{\p\mcW^F_{1,\om_0}\cap\mbR^{n+1}_+}\left(F(\nu)+\om_0\left<\nu,E_{n+1}^F\right>\right)\rd A
=(n+1)\abs{\mcW_{1,\om_0}^F},
}
where $\mcW^F_{1,\om_0}=\mcW^F_1(\om_0 E^F_{n+1})\cap {\mbR_+^{n+1}}$.
\end{lemma}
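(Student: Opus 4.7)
The plan is to apply the divergence theorem to the Euclidean position vector field $X(x)=x$ on each of the two domains in question, exploiting the fact that on any Wulff shape the Euclidean pairing $\left<x-x_0,\nu\right>$ has a clean closed-form expression in terms of $F(\nu)$.

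First, I would record the following pointwise identity on a general Wulff shape $\p\mcW^F_r(x_0)$: combining \eqref{normal} with Proposition \ref{basic-F}(iii),
\eq{
\left<x-x_0,\nu(x)\right>=r\left<\Phi(\nu(x)),\nu(x)\right>=rF(\nu(x)),\quad x\in\p\mcW^F_r(x_0).
}
This is the only nontrivial input; everything else is bookkeeping.

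For \eqref{eq-area-volume}, set $D=\mcW^F\cap\mbR^{n+1}_+$, whose boundary decomposes into the curved part $\Gamma_1=\p\mcW^F\cap\mbR^{n+1}_+$, with outward unit normal $\nu$, and the flat part $\Gamma_2=\mcW^F\cap\p\mbR^{n+1}_+$, with outward unit normal $-E_{n+1}$. Since $\div X=n+1$, the divergence theorem yields
\eq{
(n+1)\abs{D}=\int_{\Gamma_1}\left<x,\nu\right>\rd A+\int_{\Gamma_2}\left<x,-E_{n+1}\right>\rd A.
}
The integral over $\Gamma_2$ vanishes since $x_{n+1}=0$ on $\p\mbR^{n+1}_+$, and on $\Gamma_1$ the pointwise identity with $x_0=0$, $r=1$ reduces $\left<x,\nu\right>$ to $F(\nu)$, producing \eqref{eq-area-volume}.

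For \eqref{eq2.9}, I would apply the same argument to $D'=\mcW^F_1(\om_0 E^F_{n+1})\cap\mbR^{n+1}_+$. The flat piece sits inside $\p\mbR^{n+1}_+$, so $\left<x,-E_{n+1}\right>=-x_{n+1}=0$ there and contributes nothing. On the curved piece, the pointwise identity with $x_0=\om_0 E^F_{n+1}$ and $r=1$ rewrites $\left<x,\nu\right>$ as $F(\nu)+\om_0\left<E^F_{n+1},\nu\right>$, and substituting into $(n+1)\abs{D'}=\int_{\p D'}\left<x,N\right>\rd A$ delivers \eqref{eq2.9}. No real obstacle arises; the lemma is a direct consequence of the divergence theorem combined with the identification of the anisotropic normal $\nu_F$ with the shifted position vector on Wulff shapes.
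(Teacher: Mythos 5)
Your proof is correct and follows essentially the same route as the paper: the divergence theorem applied to the position vector, together with the identity $\langle x-x_0,\nu\rangle = rF(\nu)$ on $\p\mcW^F_r(x_0)$. For \eqref{eq2.9} the paper instead shortcuts by substituting the shifted norm $\tilde F$ from \eqref{defn-tilde-F} into the already-proven \eqref{eq-area-volume}, but your direct repetition of the divergence argument on the translated domain is the same computation unwrapped.
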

\begin{proof}
     \eqref{eq-area-volume} follows from integration by parts of the divergence of the position vector field. \eqref{eq2.9}
follows from     plugging $\tilde F$ given by \eqref{defn-tilde-F} into \eqref{eq-area-volume}.
\end{proof}

%=============

\subsection{Properties of unbounded convex sets}\label{App-B}

The following two Propositions may be familiar to experts, especially in the case  $F(\xi)=\abs{\xi}$.

\begin{proposition}\label{Prop-avr-K}
Let $K$ be an unbounded, closed convex set with boundary $\p K$
%satisfying $\mcH^n({\rm Sing}(\p K))=0$,
which contains the origin.
Fix $0\leq r<+\infty$,
then 
$\frac{\vert \mcW^F_{r+R}\cap K\vert}{R^{n+1}}$ is non-increasing. In particular, $\frac{\vert\mcW^F_R\cap K\vert}{R^{n+1}}$ is a constant if and only if $K$ is a cone with vertex at the origin.
\end{proposition}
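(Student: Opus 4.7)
The plan is to exploit the dilation structure of Wulff balls in combination with the elementary fact that, since $K$ is convex and $0\in K$, one has $tK\subset K$ for every $t\in(0,1]$. Fix $0<R_1<R_2$ and set $t=R_1/R_2\in(0,1)$. The homothety $\phi_t\colon x\mapsto tx$ sends the Wulff ball $\mcW^F_{r+R_2}$ onto $\mcW^F_{t(r+R_2)}=\mcW^F_{tr+R_1}$, which is contained in $\mcW^F_{r+R_1}$ because $tr\le r$. Moreover $\phi_t(K)=tK\subset K$ by the above observation. Hence
\[
\phi_t\bigl(\mcW^F_{r+R_2}\cap K\bigr)=\mcW^F_{tr+R_1}\cap tK\subset\mcW^F_{r+R_1}\cap K,
\]
and taking $(n+1)$-dimensional Lebesgue measure yields $t^{n+1}\abs{\mcW^F_{r+R_2}\cap K}\le\abs{\mcW^F_{r+R_1}\cap K}$, which is exactly the asserted monotonicity after rearrangement.

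For the rigidity statement I would take $r=0$ and assume that $\abs{\mcW^F_R\cap K}/R^{n+1}$ is constant on $(0,\infty)$. The above inclusion then becomes measure-preserving, so with $t=R_1/R_2$ we get $\abs{\mcW^F_{R_1}\cap tK}=\abs{\mcW^F_{R_1}\cap K}$. Since $tK\subset K$, the difference $(K\setminus tK)\cap\mcW^F_{R_1}$ is Lebesgue-null. Assuming $K$ has nonempty interior (the degenerate case where $K$ lies in a proper affine subspace is handled by running the same argument inside the affine hull of $K$), the set $\mathrm{int}(K)\setminus tK$ is open, and its intersection with $\mcW^F_{R_1}$ is an open null set, hence empty. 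Using the standard fact $\overline{\mathrm{int}(K)}=K$ for closed convex sets with nonempty interior, passing to closures gives $K\cap\overline{\mcW^F_{R_1}}\subset tK$.

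From here the cone property is immediate: for any $x\in K$, choose $R_1$ with $x\in\mcW^F_{R_1}$ and any $R_2>R_1$ to obtain $x\in tK$, i.e.\ $sx\in K$ for $s=R_2/R_1$. Letting $R_2$ range over $(R_1,\infty)$ shows every $sx$ with $s>1$ lies in $K$; combined with $[0,x]\subset K$ (convexity and $0\in K$), the whole ray from the origin through $x$ sits in $K$, so $K$ is a cone with vertex at $0$. The converse direction is trivial from the scaling identity $\mcW^F_R\cap K=R(\mcW^F_1\cap K)$. I expect the only nontrivial point to be the transition from the measure equality $\abs{(K\setminus tK)\cap\mcW^F_{R_1}}=0$ to the genuine set inclusion $K\cap\mcW^F_{R_1}\subset tK$; this relies on the stated regularity properties of closed convex bodies and on treating the lower-dimensional case separately, but no deep input is required.
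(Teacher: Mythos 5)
Your proof is correct, and it takes a genuinely different route from the paper's. The paper differentiates $|\mcW^F_{r+R}\cap K|/R^{n+1}$ via the co-area formula, rewrites the volume via the divergence theorem, and uses convexity in the form $\langle x,\bar N(x)\rangle\ge0$ on $\p K$ to conclude the derivative is nonpositive; for rigidity, equality forces $\langle x,\bar N(x)\rangle=0$ $\mcH^n$-a.e.\ on $\p K$, whence $K$ is a cone by an appeal to \cite{Mag12}*{Proposition~28.8}. Your argument instead exploits the dilation structure directly: the single inclusion $\phi_t(\mcW^F_{r+R_2}\cap K)\subset\mcW^F_{r+R_1}\cap K$ for $t=R_1/R_2$, combined with $tK\subset K$, gives the monotonicity with no calculus at all, and in the equality case the inclusion becomes a.e.\ an equality of open sets, so one can extract the set inclusion $K\cap\overline{\mcW^F_{R_1}}\subset tK$ and read off the cone property pointwise. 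Your route is shorter, more elementary, and self-contained (no external citation for the rigidity). The paper's calculus-based route is somewhat more robust to perturbations of the problem (it computes the actual derivative) and parallels the Bishop--Gromov argument it is modeled on, which is why they prefer it.

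Two small points worth being explicit about. First, to pass from $|\mathrm{int}(K)\setminus tK\cap\mcW^F_{R_1}|=0$ to emptiness you are using that this is an open set; to then conclude $K\cap\overline{\mcW^F_{R_1}}\subset tK$ via $\overline{\mathrm{int}(K)}=K$ you also implicitly need $\mathrm{int}(K)\cap\mcW^F_{R_1}\neq\emptyset$, which does hold because $0\in K$ and the open segment from $0$ to any interior point of $K$ lies in $\mathrm{int}(K)$; this is easy but should be said. Second, in the degenerate case $\mathrm{int}(K)=\emptyset$ the ratio is identically zero while $K$ need not be a cone, so the ``only if'' direction genuinely fails; this is an issue with the statement itself (the paper tacitly assumes $K$ full-dimensional), not with your proof, but ``run the same argument in the affine hull'' does not actually rescue the iff and it would be cleaner to simply restrict to the full-dimensional case.
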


\begin{proof}[Proof of Proposition \ref{Prop-avr-K}]
By using the co-area formula, for any $0\leq r$, we have
\eq{
\frac{\rd}{\rd R}\frac{\vert \mcW^F_{r+R}\cap K\vert}{R^{n+1}}
=R^{-(n+1)}\left(\int_{\p\mcW^F_{r+R}\cap K}\frac{1}{|DF^o|}\rd A- (n+1)\frac{\vert \mcW^F_{r+R}\cap K\vert}{R}\right).
}
On the other hand, by the divergence theorem, we have
\eq{
(n+1)\vert\mcW^F_{r+R}\cap K\vert
=&\int_{\mcW^F_{r+R}\cap K}{\rm div}(x)\rd x\\%\mcL^{n+1} \\
=&\int_{\p\mcW^F_{r+R}\cap K}\left\<x,\frac{DF^o(x)}{\abs{DF^o(x)}}\right\>\rd A
+\int_{\mcW^F_{r+R}\cap\p K}\<x,\bar N(x)\>\rd A\\
\geq&\int_{\p\mcW^F_{r+R}\cap K}\frac{R}{\abs{DF^o}}\rd A.
}
In the last inequality, we have used $\<x, \bar N(x)\>\ge 0$ thanks to the convexity of $K$.

It is then direct to see that
\begin{eqnarray*}
\frac{\rd}{\rd R}\frac{\vert\mcW^F_{r+R}\cap K\vert}{R^{n+1}}\le 0.
\end{eqnarray*}

If
$\frac{\rd}{\rd R}\frac{\vert\mcW^F_R\cap K\vert}{R^{n+1}}= 0,$ then
$\<x,\bar N(x)\>=0$ for $\mcH^n$-a.e. $x\in\p K$, it follows that $K$ is a cone with vertex at the origin, see e.g., \cite{Mag12}*{Proposition 28.8}.
\end{proof}

A direct consequence is that one can define the \textit{asymptotic volume ratio with respect to $F$} for $K$ as
\eq{\label{defn-AVR(K)}
 {\rm AVR}_F(K)
 =\lim_{R\to\infty} \frac{\vert \mcW^F_R\cap K\vert}{\abs{\mcW^F}R^{n+1}}.
}

Similarly, we show the following asymptotic volume ratio for any $F$.

\begin{proposition}\label{Prop-tangent-cone-infinity}
Let $K$ be an unbounded, closed convex set with boundary $\p K$
%satisfying $\mcH^n({\rm Sing}(\p K))=0$,
which contains the origin.
There is a unique \textit{tangent cone at infinity} of $K$, say $K_\infty$.
Moreover, one has
\eq{
{\rm AVR}_F(K)
=\frac{\vert\mcW^F \cap K_\infty\vert}{\abs{\mcW^F}}.
}
\end{proposition}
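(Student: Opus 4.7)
The plan is to define $K_\infty$ as the intersection $\bigcap_{R>0}\frac{1}{R}K$, where $\frac{1}{R}K\coloneqq\{x\in\mbR^{n+1}:Rx\in K\}$, and then to read off the limit identity from a simple rescaling together with the continuity of Lebesgue measure from above. First, I would use the hypothesis $0\in K$ together with the convexity of $K$ to conclude that $K$ is star-shaped at the origin, which yields the monotonicity
\eq{
\tfrac{1}{R_2}K \subset \tfrac{1}{R_1}K \quad\text{whenever } 0<R_1<R_2,
}
because $R_2 x\in K$ forces $R_1 x=\tfrac{R_1}{R_2}(R_2 x)+\left(1-\tfrac{R_1}{R_2}\right)\cdot 0\in K$. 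Consequently $K_\infty$ is a closed convex set containing the origin, and the change of variable $R\mapsto R/t$ gives $tK_\infty=K_\infty$ for every $t>0$, so that $K_\infty$ is indeed a closed convex cone. Adopting the convention that the tangent cone at infinity is the local Hausdorff limit of $\frac{1}{R}K$ as $R\to\infty$, the monotone shrinking just established makes the limit unique and identifies it with $K_\infty$.

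Next, I would apply the scaling $x\mapsto x/R$, under which $\frac{1}{R}\mcW^F_R=\mcW^F$ and $\mcL^{n+1}$ rescales by a factor $R^{-(n+1)}$, to obtain the identity
\eq{\label{pf-Prop-tangent-cone-infinity-1}
\frac{\abs{\mcW^F_R\cap K}}{R^{n+1}}=\abs{\mcW^F\cap \tfrac{1}{R}K}.
}
The claim then reduces to computing the limit of the right-hand side of \eqref{pf-Prop-tangent-cone-infinity-1} as $R\to\infty$. Since $\{\mcW^F\cap \frac{1}{R}K\}_{R>0}$ is a decreasing family of measurable sets, all contained in the bounded set $\mcW^F$ of finite Lebesgue measure, continuity of measure from above gives
\eq{
\lim_{R\to\infty}\abs{\mcW^F\cap\tfrac{1}{R}K}
=\Abs{\bigcap_{R>0}\left(\mcW^F\cap\tfrac{1}{R}K\right)}
=\abs{\mcW^F\cap K_\infty},
}
and combining this with the definition of ${\rm AVR}_F(K)$ in \eqref{defn-AVR(K)} completes the proof.

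The main obstacle is primarily notational: making precise in what sense $K_\infty$ is \emph{the} tangent cone at infinity, so that the uniqueness assertion is meaningful. Once the natural definition through local Hausdorff limits is adopted, the monotone shrinking of $\frac{1}{R}K$ handles existence and uniqueness simultaneously, and the remaining measure-theoretic content is essentially routine.
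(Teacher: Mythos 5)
Your proof is correct, and it takes a genuinely different and more elementary route than the paper's. The paper extracts a subsequential $L^1_{\rm loc}$-limit $K_\infty$ of the blow-downs $K_{R_h}=\tfrac{1}{R_h}K$ via the compactness theorem for sets of locally finite perimeter (Maggi, Cor.~12.27), then uses the monotonicity from Proposition~\ref{Prop-avr-K} to show any such limit has volume ratio $\tfrac{|\mcW^F_s\cap K_\infty|}{s^{n+1}}$ constant in $s$, hence is a cone, and deduces the formula; uniqueness is handled somewhat implicitly by observing that every subsequential limit yields the same volume ratio. You instead observe that $0\in K$ and convexity make $K$ star-shaped about the origin, hence $\tfrac{1}{R}K$ is monotone decreasing in $R$, and define $K_\infty=\bigcap_{R>0}\tfrac{1}{R}K$ outright; the cone property follows from a one-line scaling argument, and the limit of volumes is just continuity of measure from above on the decreasing family $\mcW^F\cap\tfrac{1}{R}K\subset\mcW^F$. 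This avoids the compactness machinery entirely and, by exhibiting the blow-downs as a nested decreasing family, makes the uniqueness of the tangent cone at infinity transparent rather than a byproduct of volume identities. The only thing to be careful about is that ``uniqueness of the tangent cone at infinity'' requires fixing a mode of convergence; your Hausdorff-limit convention and the paper's $L^1_{\rm loc}$ convention agree here because the family is monotone and convex, but it is worth stating that equivalence if you want your version to substitute cleanly for the paper's.
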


\begin{proof}[Proof of Proposition \ref{Prop-tangent-cone-infinity}]
Denote by $K_R=\frac{1}{R}K$. From the compactness result, see e.g., \cite{Mag12}*{Corollary 12.27}, we know that, up to a subsequence, say $\{R_h\}_{h\in\mbN}$,
\eq{
K_{R_h}\xrightarrow{{\rm loc}}K_\infty
}
for some sets of locally finite perimeter $\mcC$, with convergence in the sense that, for every compact set $E\subset\mbR^{n+1}$,
\eq{\label{eq-limit-Difference-Set}
\lim_{h\ra\infty}\Abs{E\cap\left(K_\infty\De K_{R_h}\right)}=0.
}
By the monotonicity, one can prove that $K_\infty$ must be a cone. In fact,
for each $s>0$, we have
\eq{
\lim_{h\to\infty}\vert \mcW^F_s\cap K_{R_h}\vert
=\lim_{h\to\infty}\frac{\vert \mcW^F_{sR_h}\cap K\vert}{R_h^{n+1}}
\overset{\eqref{defn-AVR(K)}}{=}%\abs{\mcW^F_1(0)}
s^{n+1}{\rm AVR}_F(K)\abs{\mcW^F}.
}
On the other hand, from \eqref{eq-limit-Difference-Set} we deduce
\eq{
\lim_{h\to\infty}\vert \mcW^F_s\cap K_{R_h}\vert
=\vert \mcW^F_s\cap K_\infty\vert.
}
It follows that, regardless of the choice of convergence subsequence, there always holds
\eq{
\frac{\vert \mcW^F_s\cap K_\infty\vert}{s^{n+1}}
={\rm AVR}_F(K)\abs{\mcW^F}.
}
As the LHS is a constant about $s$, it follows from Proposition \ref{Prop-avr-K} that
$\mcC$ is a cone,
revealing the fact that
\eq{
{\rm AVR}_F(K)
=\frac{\vert\mcW^F\cap K_\infty\vert}{\abs{\mcW^F}}.
}
In other words, we have proved that for any such non-compact convex set $K$, there exists a unique \textit{tangent cone at infinity}, denoted by $K_\infty$.
This completes the proof.
\end{proof}

%========
\section{Willmore inequality in unbounded convex sets}\label{Sec-3}

As in \cite{JWXZ23}, we introduce the following flow which generates \textit{parallel hypersurfaces} in the anisotropic free boundary sense, defined by
\eq{\label{defn-zeta_om_0}
\zeta_{F}(x,t)
=x+t\Phi(\nu(x)),\text{ }(x,t)\in\S\times\mbR_+ \eqqcolon Z.
}
In the rest of the section,
we write $\mcW^{F}_{r,K}=\mcW^{F}_{r}(0)\cap{K}$ for any $r>0$.

\begin{proposition}\label{Prop-surjective-halfspace-ani}
Given an unbounded, closed convex sets $K$  in $\mbR^{n+1}$.
Let $\S\subset K$ be a compact, embedded $C^2$-hypersurface with boundary $\p\S\subset{\rm Reg}(\p K)$ such that \eqref{condi-convexcone-ani} holds.
Denote by $\Om$ the bounded domain delimited by $\S$ and $\p K$.
Assume that $0\in {\rm int}(\p\Om\cap\p K)$.
Define a distance function on $K$ by 
\eq{\label{defn-d-om_0}
d_{F_\ast}(y,\overline\Omega)=\inf_{r>0}\left\{r\mid \overline{\mcW^{F_\ast}_r(y)}\cap\overline{\Omega}\not=\emptyset\right\}.
}
Then we have the following statements:
\begin{enumerate}
    \item For any $R>0$, there holds
\eq{\label{ineq-set-aniso-halfspace}
&\left\{y\in {K}\setminus \overline{\Omega}:d_{F_\ast}(y,\overline\Omega) \leq R\right\}\\
\subset&\left\{\zeta_F(x,t)\mid x\in \Sigma,t\in\left(0,\min(R,\tau(x))\right]\right\},
}
where
$\tau$ is a function defined on $\S$ by 
\eq{\label{defn-tau-halfspace-ani}
\tau(x)=
\begin{cases}
+\infty, &\text{if }\kappa_i^{F_\ast}(x)\leq 0 \text{ for any } i=1,\cdots,n,\\
\frac{1}{\max_i \kappa_i^{F_\ast}(x)},&\text{otherwise}.
\end{cases}
}
    \item 
    For any $R>0$, there holds
\eq{\label{eq-d_om-r+R}
\left\{  y\in {K}:d_{F_\ast}\left(  y,\overline{\mcW^{F}_{r,K}}\right) \leq R\right\}
=\overline{\mcW^{F}_{r+R,K}}.
}
    \item 
\eq{\label{eq-avr-halfspace-ani}
 \lim_{R\rightarrow +\infty}\frac{{\rm Vol}\left(\left\{  y\in {K}:d_{F_\ast}\left(  y,\overline{\Om}\right) \leq R\right\}\right)}{\abs{\mcW^F}R^{n+1}}
 %=\frac{\abs{\mcW^F_{1,\mcC}}}{\abs{\mcW^F_1(0)}}
 ={\rm AVR}_F(K),
 }
 where ${\rm AVR}_F(K)$ is defined as \eqref{defn-AVR(K)}.
\end{enumerate}
\end{proposition}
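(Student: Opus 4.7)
My plan is to treat the three items sequentially, with (1) carrying the geometric content and (2), (3) being largely formal consequences.

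For (1), fix $y\in K\setminus\overline{\Omega}$ and set $r\coloneqq d_{F_\ast}(y,\overline{\Omega})\leq R$. By compactness of $\overline{\Omega}$ and the definition of $d_{F_\ast}$, $\overline{\mcW^{F_\ast}_r(y)}$ meets $\overline{\Omega}$ only at boundary contact points; pick one, $x$. The central claim is that $x\in\S$. I rule out $x\in\overline{\Omega}\cap\p K\setminus\S$ at regular points of $\p K$ as follows: at such a point the outward normal to $\overline{\Omega}$ is $\bar N(x)$, so tangency of the Wulff ball with $\overline{\Omega}$ forces its outward normal at $x$ to equal $-\bar N(x)$. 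Using the parametrization $\p\mcW^{F_\ast}_r(y)=\{y+r\Phi_\ast(n):n\in\mbS^n\}$ together with \eqref{eq-Phi-Phi_ast}, this produces $y=x+r\Phi(\bar N(x))$; since $\<\Phi(\bar N(x)),\bar N(x)\>=F(\bar N(x))>0$ by Proposition \ref{basic-F}(iii), the vector $\Phi(\bar N(x))$ has a strictly outward Euclidean component at $x$, placing $y$ outside $K$ — a contradiction. The boundary case $x\in\p\S$ is dispatched by invoking the angle hypothesis \eqref{condi-convexcone-ani}, which guarantees that such a contact can be displaced along $\S$ without decreasing $r$; singular points of $\p K$ are handled by an approximation argument together with $\mcH^n({\rm Sing}(\p K))=0$. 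Once $x\in\S$ is known, tangency gives $y-x=r\Phi(\nu(x))$, i.e.\ $y=\zeta_F(x,r)$. Finally, the anisotropic principal curvatures of $\p\mcW^{F_\ast}_r(y)$ at $x$ with respect to the outward normal $-\nu(x)$ and $F_\ast$ are all equal to $1/r$, and the non-crossing of $\mcW^{F_\ast}_r(y)$ with $\S$ near $x$ forces $1/r\geq\kappa_i^{F_\ast}(x)$ for every $i$, i.e.\ $r\leq\tau(x)$.

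For (2), I would establish the scalar identity $d_{F_\ast}(y,\overline{\mcW^F_{r,K}})=\max\{0,F^o(y)-r\}$ for every $y\in K$. The upper bound uses the competitor $z_\star=\frac{r}{F^o(y)}y$, which lies in $\overline{\mcW^F_r}\cap K$ by convexity of $K$ and $0\in K$; one-homogeneity and \eqref{eq-F^o-F^o_ast} give $F^o_\ast(z_\star-y)=F^o(y-z_\star)=F^o(y)-r$. The matching lower bound is the subadditivity of Proposition \ref{basic-F}(ii): any $z\in\overline{\mcW^F_{r,K}}$ satisfies $F^o(y)\leq F^o(z)+F^o(y-z)\leq r+F^o_\ast(z-y)$. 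The identity then translates $d_{F_\ast}(y,\overline{\mcW^F_{r,K}})\leq R$ into $F^o(y)\leq r+R$, which is exactly $y\in\overline{\mcW^F_{r+R,K}}$.

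For (3), I will sandwich $\overline{\Omega}$ between two Wulff balls cut by $K$: since $\overline{\Omega}$ is compact, $\overline{\Omega}\subset\overline{\mcW^F_{r_0,K}}$ for some large $r_0$, and since $0\in{\rm int}(\p\Omega\cap\p K)$ a small $\overline{\mcW^F_{s_0,K}}$ fits inside $\overline{\Omega}$. Monotonicity of $d_{F_\ast}$ in the target set combined with (2) yields
\[
\overline{\mcW^F_{s_0+R,K}}\subset\{y\in K:d_{F_\ast}(y,\overline{\Omega})\leq R\}\subset\overline{\mcW^F_{r_0+R,K}}.
\]
Dividing the $\mcL^{n+1}$-measures by $\abs{\mcW^F}R^{n+1}$ and sending $R\to\infty$, both outer ratios converge to ${\rm AVR}_F(K)$ by \eqref{defn-AVR(K)}, since $(r_0+R)^{n+1}/R^{n+1}\to1$; the squeeze theorem then delivers \eqref{eq-avr-halfspace-ani}.

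The principal obstacle is the geometric step in (1): certifying that the nearest-contact point lies on $\S$ rather than on $\p K\setminus\S$. This is delicate precisely at $\p\S$, where the angle condition \eqref{condi-convexcone-ani} must be used, and at singular points of $\p K$, where $\bar N$ is undefined and an approximation is needed. Once that is in hand, reading off $y=\zeta_F(x,r)$ from the first-order tangency and $r\leq\tau(x)$ from the second-order comparison with the Wulff ball are routine, and (2) and (3) follow essentially formally from the subadditivity and homogeneity of $F^o$ together with the convexity of $K$.
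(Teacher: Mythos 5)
Your (2) and (3) are correct, and your proof of (2) — deriving the closed-form identity $d_{F_\ast}(y,\overline{\mcW^F_{r,K}})=\max\{0,F^o(y)-r\}$ from the radial competitor $z_\star=\frac{r}{F^o(y)}y$ and the subadditivity of $F^o$ (Proposition~\ref{basic-F}(ii)) — is a genuinely cleaner route than the paper's, which constructs the distance by a geometric touching argument between the two Wulff shapes and then appeals again to the angle comparison principle. The triangle-inequality argument avoids all of that and is worth keeping.

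The gap is in part (1), precisely at the step you yourself flag as delicate. Your treatment of a first contact point $x\in\p\S$ — ``such a contact can be displaced along $\S$ without decreasing $r$'' — is not an argument, and I do not see how to make it one: there is no reason a boundary contact can be perturbed to an interior one while keeping the Wulff ball outside $\overline\Omega$. What is actually needed, and what you must supply, is a proof that even when the first touching point $x$ lies on $\p\S$, the outward unit normal $\nu^\ast(x)$ of $\p\mcW^{F_\ast}_{r_0}(y)$ at $x$ equals $-\nu(x)$, so that $y=x+r_0\Phi(\nu(x))=\zeta_F(x,r_0)$ for that very $x$. A priori $\nu^\ast(x)$ could lie anywhere on the geodesic arc in $\mbS^n$ between $-\nu(x)$ and $-\bar N(x)$ (the Wulff ball need only avoid both ``faces'' of $\overline\Omega$ meeting at $x$). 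The paper pins it down by combining two one-sided estimates: the anisotropic angle comparison principle (Proposition~\ref{Prop-angle-compare}) gives $\<\Phi_\ast(\nu^\ast(x)),-\bar N(x)\>\geq\<\Phi_\ast(-\nu(x)),-\bar N(x)\>=\<\nu_F(x),\bar N(x)\>\geq 0$ using \eqref{condi-convexcone-ani}, while convexity of $K$ and $y\in K$ give $\<\Phi_\ast(\nu^\ast(x)),-\bar N(x)\>=\frac1{r_0}\<y-x,\bar N(x)\>\leq 0$; forcing equality in the angle comparison then yields $\nu^\ast(x)=-\nu(x)$. Without this step your inclusion \eqref{ineq-set-aniso-halfspace} is unproved for points $y$ whose nearest contact happens to be on $\p\S$, which is exactly the regime where the hypothesis \eqref{condi-convexcone-ani} is doing work.

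Two smaller remarks. First, your exclusion of contact on $\p K\setminus\S$ is correct at regular points, but the appeal to $\mcH^n({\rm Sing}(\p K))=0$ is misplaced: you are dismissing a single touching point, not a set of positive measure, and measure-zero considerations say nothing about it. The exclusion in fact works without approximation at singular points too, by replacing $\bar N(x)$ with any vector in the normal cone of $K$ at $x$ and observing that $-\nu^\ast(x)$ must lie in that cone, whence $0\geq\<y-x,-\nu^\ast(x)\>=r_0F_\ast(\nu^\ast(x))>0$ gives the contradiction directly. Second, the final inequality $r_0\leq\tau(x)$ from the second-order tangency with the Wulff ball is fine as you state it, provided you have first established $y=\zeta_F(x,r_0)$ with $\nu^\ast(x)=-\nu(x)$; it is another reason the Case~1 normal identification cannot be skipped.
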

\begin{proof}
(1):
For any $y\in {K}\setminus \overline{\Omega}$ satisfying $d_{F_\ast}(y,\overline\Omega)\leq R$, we use a family of closed Wulff balls $\{\overline{\mcW^{F_\ast}_r(y)}\}_{r>0}$ to touch $\overline\Omega$.
Clearly there must exist $x\in\partial \Omega$ and $0<r_0\leq R$,
such that  $\overline{\mcW_{r_0}^{F_\ast}(y)}$ touches $\overline\Omega$ for the first time at a point $x$.
Let $\nu^\ast(x)$ be  the outer unit normal of $\overline{\mcW_{r_0}^{F_\ast}(y)}$ at $x$, and $\Phi_\ast(\nu^\ast(x))$ the anisotropic normal at $x$ satisfying 
\eq{
\Phi_\ast(\nu^\ast(x))
=\frac{x-y}{r_0}.
}
By convexity of $K$ and strictly convexity of $\mcW_{r_0}^{F_\ast}(y)$,  $x$ cannot be obtained at $\partial\Omega\setminus\Sigma$, thus only the following two cases are possible:

\textbf{Case 1.}  $x\in \partial\Sigma$.

Since $\overline{\mcW^{F_\ast}_{r_0}(y)}$ touches $\overline\Omega$ from outside at $x$, $\nu^\ast(x)$, $-\nu(x)$ and $-\bar N(x)$ lie on the same $2$-plane and moreover, $\nu^\ast$ lies in a length-minimizing geodesic on $\mbS^1$ joining $-\nu$ and $-\bar N(x)$.
Thanks to Proposition \ref{Prop-angle-compare}, we find
\eq{
\<\Phi_\ast(\nu^\ast(x)),-\bar N(x)\>
&\geq \<\Phi_\ast(-\nu(x)),-\bar N(x)\>
\\&=\<-\Phi(\nu(x)),-\bar N(x)\>
=\<\nu_F(x),\bar N(x)\>
\geq0.
}
On the other hand, since $K$ is convex and $y\in K$, we must have
\eq{
0
\geq\<y-x,\bar N(x)\>=\<-r_0\Phi_\ast(\nu^\ast(x)), \bar N(x)\>
= r_0\<\Phi_\ast(\nu^\ast(x)),-\bar N(x)\>.
}
 Hence, the above two inequalities must hold as equalities simultaneously; that is to say, $x$ must belong to $\{x\in\partial\Sigma\mid \<\nu_F(x),\bar N(x)\>=0\}$, while $\nu^\ast(x)=-\nu(x)$, then there hold that
\eq{
y=x-r_0\Phi_\ast(-\nu) 
=x+r_0\Phi(\nu)
=\zeta_{F}(x,r_0),
}
and that $\overline\Omega$ and   $\overline{\mcW^{F_\ast}_{r_0}(y)}$ are mutually tangent at x.
 Recall that $\kappa_i^{F_\ast}(x)$ denotes the anisotropic principal curvatures of $\S$ at $x$, with respect to $-\nu$ and $F_{\ast}$, we therefore find $\max_i\kappa_i^{F_\ast}(x)\leq\frac1{r_0}$.
Taking the definition of $\tau(x)$ into account, this readily implies $r_0\leq\tau(x)$.

\textbf{Case 2.} $x\in {\rm int}(\Sigma)$.

In this case, it is easy to see that $\nu^\ast(x)=-\nu(x)$ from the first touching property. We may conduct a similar 
argument as above to find that $y$ can be written as $y=\zeta_F(x,r_0)$, with $r_0\leq\tau(x)$.

Therefore, for any $y\in {K}\setminus \overline{\Omega}$, if  $d_{F_\ast}(y,\overline\Omega)\leq R$, we can find $x\in \Sigma$ and $r_0\in\left(0,\min(R,\tau(x))\right]$, such that $y=\zeta_F(x,r_0)$, which implies  \eqref{ineq-set-aniso-halfspace}.

\

\noindent(2): Now we show \eqref{eq-d_om-r+R}.
Recall that we have set $\mcW^{F}_{r,K}=\mcW^{F}_{r}(0)\cap K$.

Our first observation is that, 
if $y\in\overline{\mcW^{F}_{r,K}}$, it is easy to see that $d_{F_\ast}\left( y,\overline{\mcW^{F}_{r,{K}}}\right)=0 $.
Since it is trivial to see that $\mcW^{F}_{r,K}\subset \mcW^{F}_{r+R,K}$, to prove \eqref{eq-d_om-r+R}, it suffices to show that
\eq{\label{eq-W_{r+R}-W_r}
  \left\{  y\in {K}\setminus\overline{\mcW^{F}_{r,{K}}}:d_{F_\ast}\left(y,\overline{\mcW^{F}_{r,{K}}}\right) \leq R\right\}=\overline{\mcW^{F}_{r+R,{K}}}\setminus\overline{\mcW^{F}_{r,{K}}}, \quad \forall R>0.
}

{\bf Claim.}
For any $ y\in {K}\setminus \overline{\mcW^F_{r,K}}$,
let $\tilde R=d_{F_\ast}\left(y,\overline{\mcW^F_{r,{K}}}\right)$,
then $F^o(y)=r+\tilde R$.

To prove the claim,
we consider the family of closed Wulff balls $\{\overline{\mcW^{F_\ast}_r(y)}\}_{r>0}$, as $r$ increases, by the definition of $d_{F_\ast}$, we have that
$\overline{\mcW_{\tilde R}^{F_\ast}(y)}$ touches $\overline{\mcW^{F}_{r,K}}$ for the first time. 
 
Suppose that  $\overline{\mcW_{\tilde R}^{F_\ast}(y)}$ touches $\overline{\mcW^{F}_{r,K}}$ at $x$, then
\eq{x\in \partial \mcW^{F_\ast}_{\tilde R}(y)\cap\partial \mcW^{F}_{r,K}.
}
Since
$\overline{\mcW_{\tilde R}^{F_\ast}(y)}$ is strictly convex, 
and $K$ is convex,
%and $y\in{K}$ lies in $\overline{\mcW_{\tilde R}^{F_\ast}(y)}$, 
we deduce that 
  %$x\not\in \partial B^{F_ast}_{r,\omega_0}\setminus \mcW_{r}(r\omega_0 E^{F_ast}_{n+1})$
\eq{
x\in \partial \mcW^{F_\ast}_{\tilde R}(y)\cap\partial\mcW^{F}_{r}(0)\cap {K}.
}
Let $\nu(x)$, $\tilde\nu(x)$ denote respectively the unit outer normals of $\mcW^{F_\ast}_{\tilde R}(y)$ and $\mcW^{F}_{r}(0)$ at $x$.
%for simplicity we use the shorthand $\nu,\tilde\nu$,
It follows that
  \eq{\label{eq-normal-wulffshape-halfspace}
 x-y=\tilde R \Phi_\ast(\nu(x)),\quad
 x=r\Phi(\tilde \nu(x)).
 }
 
If $x\in \partial \mcW^{F_\ast}_{\tilde R}(y)\cap\partial\mcW^{F}_{r}(0)\cap \partial K$, we have $-\nu(x)$ lies in a length-minimizing geodesic joining $\tilde\nu(x)$ and $\bar N(x)$ in $\mbS^n$.
 By Proposition \ref{Prop-angle-compare}, it holds that
\eq{
\<\Phi(\tilde\nu(x)),\bar N(x)\>\leq\<\Phi(-\nu(x)), \bar N(x)\>=-\<\Phi_\ast(\nu(x)),\bar N(x)\>.
}
On the other hand, it follows from \eqref{eq-normal-wulffshape-halfspace}  and convexity of $K$ that
\eq{
\<\Phi(\tilde\nu(x)),\bar N(x)\>
=\left\<\frac{x}{r},\bar N(x)\right\>\geq0,\\
-\<\Phi_\ast(\nu(x)),\bar N(x)\>
=-\left\<\frac{x-y}{\tilde R},\bar N(x)\right\>
\leq0.
}
Combining all above,
we thus obtain $\nu(x)=-\tilde\nu(x)$ and
$\<x,\bar N(x)\>=\left<y,\bar N(x)\right>=0$.

If $x\in \partial \mcW^{F_\ast}_{\tilde R}(y)\cap\partial\mcW^{F}_{r}(0)\cap{\rm int}(K)$, it is easy to see that  $\nu(x)=-\tilde\nu(x)$.
 
Summarizing, in either case, we always have $\nu(x)=-\tilde\nu(x)$, thus
\eq{
\Phi_\ast(\nu)=-\Phi(-\nu)=-\Phi(\tilde\nu).
}
Using \eqref{eq-normal-wulffshape-halfspace} again, we get 
\eq{
F^o(y)
=&F^o(y-x +x)\\
=&F^o(-\tilde R\Phi_\ast(\nu(x))+r\Phi(\tilde\nu(x)))
=F^o((r+\tilde R)\Phi(\tilde\nu(x)))
=r+\tilde R,
}
which proves the claim.
Now we prove \eqref{eq-W_{r+R}-W_r}.
 
{\bf"$\subset$":} For any $ y\in  K\setminus \overline{\mcW^F_{r,K}}$ satisfying $\tilde R=d_{F_\ast}\left(  y,\overline{\mcW^{F}_{r,K}}\right)\leq R $, we deduce immediately from the above estimate that
\eq{
F^o\left(y\right)
= r+\tilde R
\leq r+R,
}
thus $y\in \overline{\mcW^F_{r+R,K}}\setminus \overline{\mcW^F_{r,{K}}}$.
 
{\bf "$\supset$":} For any $y\in \overline{\mcW^F_{r+R,{K}}}\setminus \overline{\mcW^F_{r,{K}}}$, suppose that $\tilde R=d_{F_\ast}\left(y,\overline{\mcW^F_{r,K}}\right)> R $, then it holds that
\eq{
F^o\left(y\right)
=r+\tilde R
>r+R,
}
which contradicts to $y\in \overline{\mcW^F_{r+R,K}}$. Hence $d_{F_\ast}\left(y,\overline{\mcW^F_{r,{K}}}\right)\leq R$, which proves (2).

\

\noindent(3):
Since $\Omega$ is bounded, we could find $r_1$ and $r_2$ such that $\mcW^F_{r_1,{K}}\subset \Omega\subset \mcW^F_{r_2,{K}}$. From the definition of $d_{F_\ast}$ \eqref{defn-d-om_0}, we find 
\eq{
d_{F_\ast}(y,\overline{\mcW^F_{r_2,{K}}})\leq d_{F_\ast}(y,\overline{\Om})\leq d_{F_\ast}(y, \overline{\mcW^F_{r_1,{K}}}),
\quad \forall y\in{K},
}
and it follows that
\eq{\label{inclu-sets-halfspace-ani}
& \left\{  y\in {K}:d_{F_\ast}\left(  y,\overline{\mcW^F_{r_1,{K}}}\right)\leq R\right\}\\
\subset&\left\{  y\in {K}:d_{F_\ast}\left(  y,\overline\Omega\right) \leq R\right\}\\
 \subset &\left\{  y\in {K}:d_{F_\ast}\left(  y,\overline{\mcW^F_{r_2,{K}}}\right) \leq R\right\}.
}
Dividing \eqref{inclu-sets-halfspace-ani} by $\abs{\mcW^F}R^{n+1}$, we get \eqref{eq-avr-halfspace-ani}  by using \eqref{eq-d_om-r+R} and Proposition \ref{Prop-avr-K}.
\end{proof}

We have now all the prerequisites to prove the Willmore inequality.

\begin{proof}[Proof of Theorem \ref{Thm-aniso-convex}]
We first prove the Willmore inequality.
Our starting point is that, thanks to \eqref{ineq-set-aniso-halfspace}, we may use the area formula
{(see e.g., \cite{Simon83}*{8.5})}
to estimate the volume as follows: for any $R>0$,
\eq{\label{ineq-willmore-aniso-halfspace-1}
{\rm Vol}\left(\left\{  y\in {K}:d_{F_\ast}\left(  y,\overline\Omega\right)
\leq R\right\}\right)
\leq \left\vert\Omega\right\vert
+\int_{\Sigma}\int_{0}^{\min\left(  R,\tau\left(  x\right)\right)}{\rm J}^Z{\zeta_F}(x,t)\rd t\rd A.
}
By a simple computation, we see, the tangential Jacobian of $\zeta_F$ along $Z=\S\times\mbR_+$ at $(x,t)$ is just
$${\rm J}^Z{\zeta_F}(x,t)
= F(\nu(x))\prod_{i=1}^n(1+t\kappa_i^F(x)).
$$
Recalling the definition of $\tau$ in \eqref{defn-tau-halfspace-ani}, and also taking Proposition \ref{Prop-F-F_ast}(3) into account, 
we may rewrite $\tau$ as
\eq{
\tau(x)=
\begin{cases}
+\infty, &\text{if }\kappa_i^F(x)\geq 0 \text{ for any } i=1,\cdots,n,\\
-\frac{1}{\min_i \kappa_i^F(x)},&\text{otherwise}.
\end{cases}
}
It is clear that for any $x\in\S$, there holds $1+t\kappa_i^F(x)>0$, for each $i=1,\cdots,n$, and for any $t\in(0,\tau(x))$.
By the
%arithmetic mean-geometric mean
AM-GM inequality, 
we have 
\eq{\label{ineq-AM-GM-aniso-halfspace}
{\rm J}^Z{\zeta_F}(x,t)
\leq F(\nu)\left(1+{H^F(x)}t\right)^n.%% H=\sum_{i=1}^n k_i
}

To have a closer look at \eqref{ineq-willmore-aniso-halfspace-1}, we divide $\S$ into two parts $\S_+=\{x\in\Sigma:H^F(x)>0\}$ and $\S\setminus\Sigma_+$. 
On $\S\setminus\S_+$, we have 
\eq{
0\leq \left(1+{H^F}t\right)^n\leq 1, \forall t\in [0,\tau(x)],
}
which, in conjunction with \eqref{ineq-AM-GM-aniso-halfspace}, gives
\eq{\label{ineq-partofsigma-aniso-halfspace}
&\int_{\S\setminus\S_+}\int_{0}^{\min\left(R,\tau\left(x\right)\right)}{\rm J}^Z{\zeta_F}(x,t)\rd t\rd A
%\leq&\int_{\S\setminus\S_+}\int_{0}^{\min\left(R,\tau\left(  x\right)  \right)}F(\nu)\left(1+\frac{H^F(x)}{n}t\right)^n\rd t\rd A\\
\le O(R).
}
Thus \eqref{ineq-willmore-aniso-halfspace-1} can be further estimated as follows:
\eq{\label{ineq-willmore-aniso-halfspace-2}
&{\rm Vol}\left(\left\{y\in K:d_{F_\ast}\left(  y,\overline\Om\right)\leq R\right\}\right)\\
\leq&\abs{\Om}
+\int_{\S_+}\int_{0}^{\min\left(  R,\tau\left(x\right)\right)}{\rm J}^Z{\zeta_F}\rd t\rd A 
+\int_{\S\setminus\S_+}\int_{0}^{\min\left(R,\tau\left(x\right)\right)}{\rm J}^Z{\zeta_F}\rd t\rd A\\
%\leq&\int_{\S_+}\int_{0}^{\min{(R,\tau(x))}}F(\nu)\left(1+\frac{H^F(x)}{n}t\right)^n\rd t\rd A +O(R)\\
\leq&\int_{\S_+}\int_{0}^{R}F(\nu)\left(1+{H^F(x)}t\right)^n\rd t\rd A+O(R)\\
=&\frac{R^{n+1}}{n+1}\int_{\S_+}F(\nu)\left({H^F(x)}\right)^n\rd A
+O(R^{n})\\
\leq&\frac{R^{n+1}}{n+1}\int_{\S}F(\nu)\Abs{{H^F}}^{n}\rd A
+O(R^{n}),
}
where the third inequality holds due to \eqref{ineq-AM-GM-aniso-halfspace}.

Dividing  both sides of \eqref{ineq-willmore-aniso-halfspace-2} by $R^{n+1}$ and
letting $R\rightarrow +\infty$, we deduce from \eqref{eq-avr-halfspace-ani} that 
\eq{\label{xeq-1}
{\rm AVR}_F(K)\abs{\mcW^F}
\le \frac{1}{n+1}\int_{\S_+}F(\nu){\left({H^F}\right)}^{n}\rd A \leq\frac{1}{n+1}\int_{\S}F(\nu)\abs{{H^F}}^{n}\rd A.
}
which finishes the proof of the inequality \eqref{ineq-willmore-aniso-cone}.

%\noindent{\bf Step 2. Proof of Rigidity.}

Now we show the rigidity. First we observe:
%In the following, we analysis the equality case.
%We first {\bf Claim} that:

{\bf Claim 1. }
If the equality in \eqref{ineq-willmore-aniso-cone} holds, 
%since $\S$ is a $C^2$-hypersurface, from \eqref{ineq-willmore-aniso-halfspace-2}, 
then $H^F\geq0$ on $\S$,
 $\tau(x)=+\infty$ on $\S_+$, and equality in \eqref{ineq-AM-GM-aniso-halfspace} holds  on $\S_+$.

To prove the first assertion of the claim,
we deduce from \eqref{xeq-1} and the equality case of \eqref{ineq-willmore-aniso-cone} that
\eq{\label{eq:S-S_+}
\frac{1}{n+1}\int_{\S}F(\nu)\abs{{H^F}}^{n}\rd A
= \frac{1}{n+1}\int_{\S_+}F(\nu){\left({H^F}\right)}^{n}\rd A,
}
which implies $H^F\geq 0$ on $\S$.

To prove $\tau(x)=+\infty$ on $\S_+$, 
we argue by contradiction and
assume that there exists a point $x_0\in \S_+$ satisfying $\tau(x_0)<+\infty$. Since $\S\in C^2$, we can find a neighborhood of $x_0$ in $\S_+$, denoted by $U(x_0)$, such that $\tau(x)\leq 2\tau(x_0)<+\infty$ on $U(x_0)$.  For any $R>2\tau(x_0)$, we obtain from \eqref{ineq-willmore-aniso-halfspace-2} that
\eq{
&{\rm Vol}\left(\left\{y\in K:d_{F_\ast}\left(  y,\overline\Om\right)\leq R\right\}\right)\\
\leq& \int_{\S_+}\int_{0}^{\min{(R,\tau(x))}}F(\nu)\left(1+{H^F(x)}t\right)^n\rd t\rd A +O(R)\\
%=&\abs{\Om}+\int_{\S_+\setminus U(x_0)}\int_{0}^{\min{(R,\tau(x))}}F(\nu)\left(1+\frac{H^F(x)}{n}t\right)^n\rd t\rd A \\
%&+\int_{ U(x_0)}\int_{0}^{\min{(R,\tau(x))}}F(\nu)\left(1+\frac{H^F(x)}{n}t\right)^n\rd t\rd A +O(R)\\
\leq& \int_{\S_+\setminus U(x_0)}\int_{0}^{R}F(\nu)\left(1+{H^F(x)}t\right)^n\rd t\rd A \\
&+\int_{ U(x_0)}\int_{0}^{\tau(x)}F(\nu)\left(1+{H^F(x)}t\right)^n\rd t\rd A +O(R)\\
\leq&\int_{\S_+\setminus U(x_0)}\int_{0}^{R}F(\nu)\left(1+{H^F(x)}t\right)^n\rd t\rd A  +O(R)\\
=&\frac{R^{n+1}}{n+1}\int_{\Sigma_+\setminus U(x_0)} F(\nu)\left({H^F}\right)^n\rd A +O(R^n).
}
Dividing  both sides by $R^{n+1}$ and
letting $R\rightarrow +\infty$, it follows that 
\eq{
{\rm AVR}_F(K)\abs{\mcW^F}
\leq\frac{1}{n+1}\int_{\S_+\setminus U(x_0)}F(\nu){\left({H^F}\right)}^{n}\rd A,
}
which is a contradiction to the assumption that equality in \eqref{ineq-willmore-aniso-cone} holds. 

By a similar contradiction argument, we also get that
equality in \eqref{ineq-AM-GM-aniso-halfspace} holds  on $\S_+$, and we omit the proof here for brevity. 
In particular, this proves {\bf Claim 1}.

%---------------
 
By virtue of this claim, we see that  $\S=\S_+$ must be an anisotropic umbilical hypersurface, and hence a part of a Wulff shape.
Write $\S={\p\mcW_{R_0}^F(x_0)\cap K}$ for some $R_0>0$ and $x_0\in \mbR^{n+1}$.
Clearly, $\S$ determines an solid cone with vertex at $x_0$, given by
\eq{
\mcC_{x_0}
=\left\{x_0+t\frac{x-x_0}{R_0}:x\in {\rm int}(\S), t\in(0,+\infty)\right\}.
}
%By virtue of \eqref{condi-convexcone-ani} and the convexity of $K$, we know that $K\subseteq\mcC_z$.
Consider now the modified convex set $$\widetilde K=\left(K\setminus\Om\right)\cup\left(\mcC_{x_0}\cap\mcW_{R_0}^F(x_0)\right).$$
{Due to the convexity of $K$ and the boundary condition \eqref{condi-convexcone-ani}, for any $x\in\partial\Sigma$, there exists at least one supporting hyperplane passing through $x$, hence $\widetilde K$ is also convex.}

{\bf Claim 2.} $\frac{\abs{\mcW^F_R(x_0)\cap\widetilde K}}{R^{n+1}}$ is non-increasing on $R\in [0,+\infty)$.

This can be proved similarly as Proposition \ref{Prop-avr-K}.
Indeed,
by using the co-area formula, we have 
\eq{
\frac{\rd}{\rd R}\frac{\vert \mcW^F_{R}(x_0)\cap \widetilde  K\vert}{R^{n+1}}
=\frac{\int_{\p\mcW^F_{R}(x_0)\cap \widetilde K}\frac{1}{|DF^o(x-x_0)|} \rd A- (n+1)\frac{\vert \mcW^F_{R}(x_0)\cap \widetilde  K\vert}{R}}{R^{n+1}}.
}
On the other hand, by the divergence theorem, we have for $R\ge R_0$,
\eq{
(n+1)\vert\mcW^F_{R}(x_0)\cap \widetilde  K\vert
=&\int_{\mcW^F_{R}(x_0)\cap\widetilde K}{\rm div}(x-x_0)\rd x \\
=&\int_{\p\mcW^F_{R}(x_0)\cap\widetilde K}\frac{R}{\abs{DF^o(x-x_0)}}\rd A\\
&+\int_{\mcW^F_{R}(x_0)\cap\p K\setminus \Omega}\<x-x_0,\bar N(x)\>\rd A
+\int_{\p \mcC_{x_0}\cap\mcW^F_{R_0}(x_0)}\<x-x_0,\bar N(x)\>\rd A.
}
Note that $\<x-x_0,\bar N(x)\>=0$ on $\p \mcC_{x_0}\cap\mcW^F_{R_0}(x_0)$ since it is part of the boundary of a cone, and $\<x-x_0,\bar N(x)\>\ge0$ on $\mcW^F_{R}(x_0)\cap\p K\setminus \Omega$ since ${\widetilde{K}}$ is convex. 
It follows that  
\eq{
(n+1)\vert\mcW^F_{R}(x_0)\cap \widetilde  K\vert
\ge &R\int_{\p\mcW^F_{R}(x_0)\cap\widetilde K}\frac{1}{\abs{DF^o(x-x_0)}}\rd A, 
}
and in turn $$\frac{\rd}{\rd R}\frac{\vert \mcW^F_{R}(x_0)\cap \widetilde K\vert}{R^{n+1}}\le 0,$$
which proves the claim.

From the second claim, we know that 
\eq{
{\rm AVR}_F(K)\abs{\mcW^F}
=\lim_{R\to+\infty}\frac{\vert \mcW^F_{R}(x_0)\cap \widetilde  K\vert}{R^{n+1}}
\le\frac{\vert \mcW^F_{R_0}(x_0)\cap \widetilde  K\vert}{R_0^{n+1}}
=\frac{\vert \mcW^F_{R_0}(x_0)\cap \mcC_{x_0}\vert}{R_0^{n+1}}.
}
On the other hand, 
using  equality in \eqref{ineq-willmore-aniso-cone} and the fact that $\S$ is anisotropic umbilical, we get
\eq{
{\rm AVR}_F(K)\abs{\mcW^F}
=&\frac{1}{n+1}\frac{1}{R_0^n}\int_{\p\mcW_{R_0}^F(x_0)\cap K}F(\nu) \rd A
=\frac{1}{n+1}\frac{1}{R_0^n}\int_{\p\mcW_{R_0}^F(x_0)\cap \mcC_{x_0}}F(\nu) \rd A\\
=&\frac{\vert \mcW^F_{R_0}(x_0)\cap\mcC_{x_0}\vert}{R_0^{n+1}},
}
which 
implies that $\frac{\vert \mcW^F_{R}(x_0)\cap \widetilde K\vert}{R^{n+1}}$ is a constant for $R\in[R_0,+\infty)$.

%---------
\begin{figure}[H]
	\centering
	\includegraphics[width=12cm]{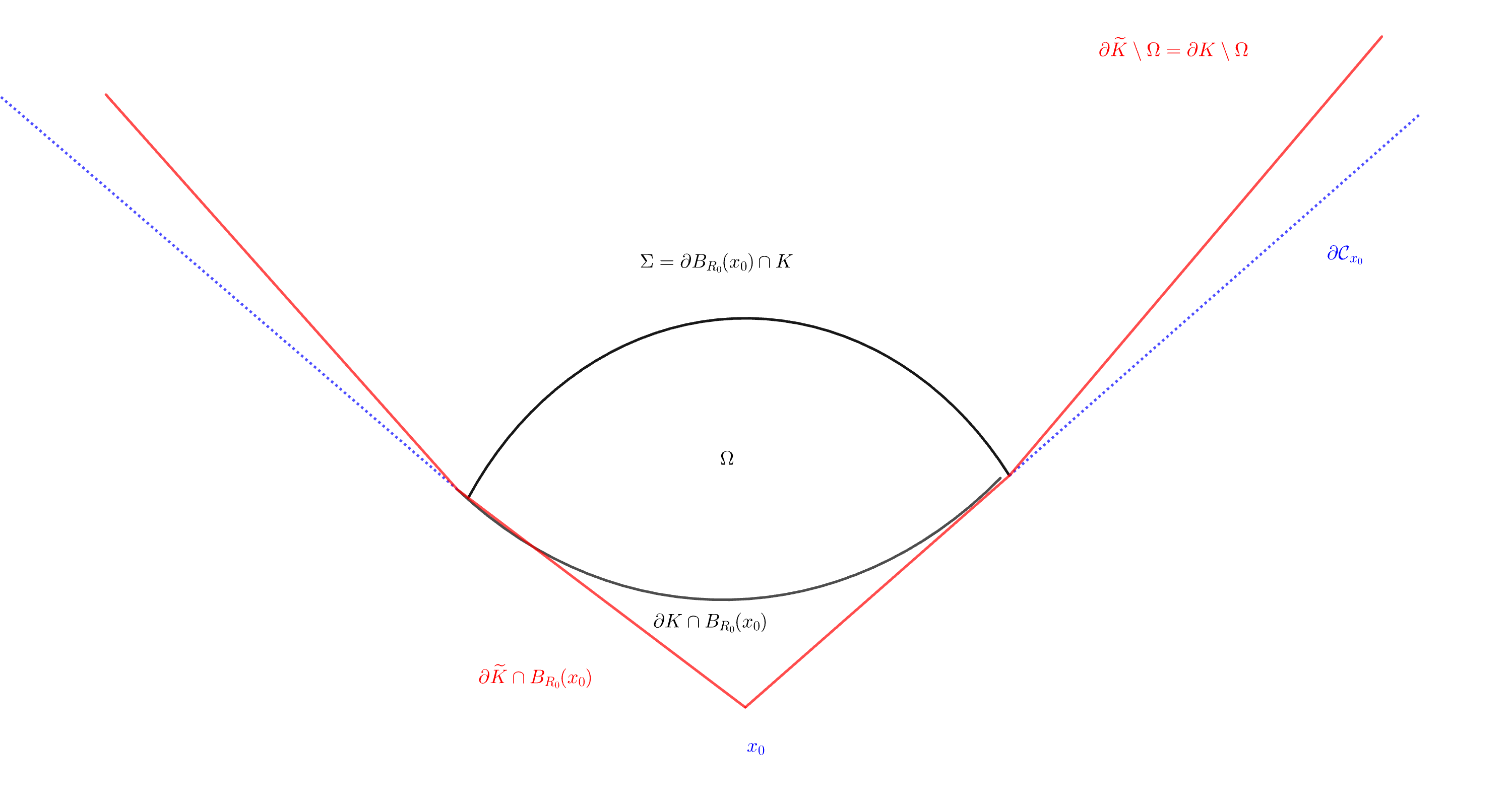}
	\caption{$\widetilde K$ and $\mathcal{C}_{x_0}$.}
	\label{Fig-1}
\end{figure}

%---------
See Fig. \ref{Fig-1} for an illustration when $F$ is the Euclidean norm, where the red part denotes the boundary of the modified set $\widetilde K$ and the blue part denotes the boundary of the cone $\mcC_{x_0}$.
Because of the boundary condition \eqref{condi-convexcone-ani}, we point out that Fig. \ref{Fig-1} is what we could expect so far.
Next, we are going to show by virtue of the claim that $\widetilde K$ is in fact $\mcC_{x_0}$, namely, the blue portion and the red portion in Fig. \ref{Fig-1} actually coincide.
In fact,
from the proof of the claim, we see that
$$\<x-x_0,\bar N(x)\>=0 \hbox{ on }\p K\setminus \Omega,$$
which implies that $\p K\setminus \Omega$ is a part of boundary of the cone centered at $x_0$. Finally from the information that
$\S$ is a part of Wulff shape centered at $x_0$ and $\p K\setminus \Omega$ is part of boundary of the cone centered at $x_0$, we see that $\S$ is indeed an anisotropic free boundary Wulff shape in $K$, i.e., $\<\nu_F,\bar N\>=0$ along $\p\S$.
This completes the proof.
\end{proof}

%============
\section{Willmore inequalities in a half-space}\label{Sec-4}

\subsection{An alternative approach in the half-space case}In this section, we use an alternative approach to prove Theorem \ref{Thm-aniso-convex} in the half-space case. We restate it here.
\begin{theorem}\label{Thm-aniso-halfspace}
Let $\S\subset\overline{\mbR^{n+1}_+}$ be a compact, embedded, $C^2$-hypersurface with boundary $\p\S\subset\p\mbR_+^{n+1}$ such that
\eq{
  \<\nu_F(x), -E_{n+1}\>\geq0,\quad  \hbox{ for  any }x\in \partial\Sigma.
}
Then we have
\eq{\label{ineq-willmore-aniso-halfspace}
\frac1{n+1}\int_{\S} F(\nu)\abs{{H^F}}^n \rd A\ge  \vert \mcW^F\cap {\mbR_+^{n+1}}\vert.
}
Equality in \eqref{ineq-willmore-aniso-halfspace} holds if and only if $\S$ is an anisotropic free boundary Wulff shape.
\end{theorem}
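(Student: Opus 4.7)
The plan is to adapt the classical Willmore argument (Gauss map surjectivity combined with an area formula) to the anisotropic half-space setting, bypassing the parallel-flow construction of Section \ref{Sec-3}. Set
\eq{
\S_+ \coloneqq \{x \in \S : \kappa_i^F(x) \geq 0 \text{ for all } i = 1, \ldots, n\},
}
the anisotropically convex part of $\S$. The core geometric statement (to be extracted as Proposition \ref{Prop-Gauss-map}) is the Gauss-image covering
\eq{\label{plan-Gauss}
\nu_F(\S_+) \supseteq \p \mcW^F \cap \overline{\mbR^{n+1}_+}.
}

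To prove \eqref{plan-Gauss}, I would fix $\xi_0 = \Phi(z_0) \in \p\mcW^F \cap \overline{\mbR^{n+1}_+}$ and slide the family of affine hyperplanes $L_t = \{y : \<y, z_0\> = t\}$ downward from $t = +\infty$ until $L_{t_\ast}$ first touches $\S$ at some $x_\ast$. If $x_\ast \in {\rm int}(\S)$, then $\S$ lies locally on one side of $L_{t_\ast}$, so $\nu(x_\ast) = z_0$ and all $\kappa_i^F(x_\ast) \geq 0$, giving $x_\ast \in \S_+$ with $\nu_F(x_\ast) = \xi_0$. The delicate case is $x_\ast \in \p\S$: there the outward normal cone to $\S$ at $x_\ast$ is the half-plane $\{a \nu(x_\ast) + b \mu(x_\ast) : a \in \mbR,\, b \geq 0\}$, with $\mu$ the outward co-normal to $\p\S$ in $\S$. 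Combining the contact-angle hypothesis $\<\nu_F, -E_{n+1}\> \geq 0$ on $\p\S$ with the anisotropic angle comparison (Proposition \ref{Prop-angle-compare}) then rules out such boundary touchings for $\xi_0$ in the relative interior of $\p\mcW^F \cap \overline{\mbR^{n+1}_+}$; the remaining locus $\p\mcW^F \cap \p\mbR^{n+1}_+$ has $\mcH^n$-measure zero and may be absorbed by continuity.

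Given \eqref{plan-Gauss}, the area formula and AM--GM finish the proof. The outward unit normal of $\p\mcW^F$ at $\Phi(z)$ equals $z$, so $T_p\S$ and $T_{\nu_F(p)}\p\mcW^F$ both coincide with $\nu(p)^\perp$ and $d\nu_F = A_F(\nu)\circ d\nu$ acts on this common space with determinant $H^F_n$; hence $J\nu_F = H^F_n$ on $\S_+$. Using that $F(\nu(x)) = F(\tilde\nu(\nu_F(x)))$, where $\tilde\nu$ denotes the outward unit normal of $\p\mcW^F$, the weighted area formula together with \eqref{plan-Gauss} and Lemma \ref{Lem-area-volume} yields
\eq{\label{plan-area}
\int_{\S_+} F(\nu) H^F_n \rd A \geq \int_{\p\mcW^F \cap \overline{\mbR^{n+1}_+}} F(\tilde\nu) \rd \mcH^n = (n+1) \abs{\mcW^F \cap \mbR^{n+1}_+}.
}
Pointwise AM--GM on $\S_+$ gives $H^F_n \leq (H^F)^n \leq \abs{H^F}^n$, and combining this with \eqref{plan-area} produces
\eq{
\int_\S F(\nu)\abs{H^F}^n \rd A \geq \int_{\S_+} F(\nu) H^F_n \rd A \geq (n+1) \abs{\mcW^F \cap \mbR^{n+1}_+},
}
i.e.\ the inequality \eqref{ineq-willmore-aniso-halfspace}.

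For the rigidity statement, equality throughout forces: AM--GM equality, so $\S_+$ is anisotropically umbilical and thus contained in a single Wulff shape; area-formula equality, so $\nu_F$ is essentially injective on $\S_+$ with image exactly the upper Wulff cap; and saturation of the contact-angle inequality along $\p\S$. Collectively these force $\S$ to be an anisotropic free-boundary Wulff shape. The principal obstacle is \eqref{plan-Gauss}: the careful characterization of the outward normal cone at boundary points of the manifold-with-boundary $\S$, coupled with the simultaneous use of the anisotropic angle comparison and the contact-angle condition to rule out first-boundary-touchings that realize points $\xi_0$ interior to $\p\mcW^F \cap \overline{\mbR^{n+1}_+}$, is the essential technical work.
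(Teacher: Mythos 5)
Your proposal reproduces the paper's own argument almost verbatim: the Gauss-image covering by touching hyperplanes (Proposition \ref{Prop-Gauss-map}, proved via the anisotropic angle comparison of Proposition \ref{Prop-angle-compare} together with the contact-angle hypothesis to rule out boundary touchings), followed by the area formula with Jacobian $H^F_n$, the pointwise AM--GM inequality, and Lemma \ref{Lem-area-volume} to convert the Wulff-cap boundary integral into $(n+1)\abs{\mcW^F\cap\mbR^{n+1}_+}$. The only cosmetic difference is that in the boundary-touching case you describe the normal cone of the hypersurface $\S$ (with conormal $\mu$) where the paper works with the normal cone of $\overline\Om$ spanned by $\nu(p)$ and $-E_{n+1}$; both formulations feed into the same angle-comparison contradiction, so the proofs are essentially identical.
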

The alternative approach is based on  the following geometric observation for the Gauss image of $\S$.
\begin{proposition}\label{Prop-Gauss-map}
Let $\S\subset\overline{\mbR^{n+1}_+}$ be a compact, embedded, $C^2$-hypersurface with boundary $\p\S\subset\p\mbR_+^{n+1}$ such that
\eq{
  \<\nu_F(x), -E_{n+1}\>\geq0,\quad  \hbox{ for  any }x\in \partial\Sigma.
}
Then the anisotropic Gauss map $\nu_F:\S\ra\p\mcW^F$ satisfies
\eq{
(\p\mcW^F\cap {\mbR_+^{n+1}})
\subset \nu_F(\widetilde{\S_+}).
}
where $\widetilde{\S_+}$ is the subset of $\S$ where the anisotropic Weingarten map $\rd\nu_F$ is positive-semi definite.
\end{proposition}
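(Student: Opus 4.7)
The plan is to establish the proposition by a supporting hyperplane argument for the enclosed domain $\Om$, using the anisotropic angle comparison principle (Proposition \ref{Prop-angle-compare}) to rule out boundary maximizers. Fix $z_0 \in \p\mcW^F \cap \mbR_+^{n+1}$ and write $z_0 = \Phi(\xi_0)$ for a unique $\xi_0 \in \mbS^n$; note that $\<z_0, E_{n+1}\> > 0$ strictly, since $\mbR_+^{n+1}$ is the open upper half-space.

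The first step reduces the problem to finding $x_* \in \Si$ at which $\nu(x_*) = \xi_0$ and the usual shape operator $\rd\nu(x_*)$ is positive semi-definite. Indeed, since $A_F(\nu)$ is positive definite and symmetric, $\rd\nu_F = A_F(\nu)\circ\rd\nu$ is similar to the symmetric operator $A_F(\nu)^{1/2}\,\rd\nu\, A_F(\nu)^{1/2}$, so the anisotropic principal curvatures $\{\kappa_i^F(x_*)\}$ are non-negative if and only if the usual principal curvatures $\{\kappa_i(x_*)\}$ are. Such an $x_*$ then satisfies $\nu_F(x_*) = \Phi(\xi_0) = z_0$ and lies in $\widetilde{\Si_+}$.

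To produce $x_*$, I would consider the linear height function $\phi(x) := \<x,\xi_0\>$ on the compact set $\overline\Om$, and let $x_*$ be a maximizer. If $x_*$ lies in the relative interior of $\Si$, then the supporting hyperplane $\{\phi=\phi(x_*)\}$ is tangent to $\Si$ with $\xi_0 = \nu(x_*)$, and $\rd\nu(x_*) \geq 0$ follows from the classical maximum principle applied to $\phi|_{\Si}$, as desired.

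The main obstacle is ruling out the alternatives that $x_*$ lies on the floor $T := \overline\Om\cap\p\mbR_+^{n+1}$ or on $\p\Si$. If $x_*$ lies in the relative interior of $T$, then $\xi_0 = -E_{n+1}$, and Proposition \ref{basic-F}(iii) gives $\<\Phi(\xi_0), E_{n+1}\> = -F(-E_{n+1}) < 0$, contradicting $\<z_0,E_{n+1}\> > 0$. If $x_* \in \p\Si$, transversality yields $\nu(x_*) \neq \pm E_{n+1}$, and the tangent cone of $\overline\Om$ at $x_*$ is $\{v : \<v,\nu(x_*)\>\leq 0,\ \<v,E_{n+1}\>\geq 0\}$, whose polar (the normal cone) is the conic hull of $\nu(x_*)$ and $-E_{n+1}$. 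First-order optimality then gives $\xi_0 = \alpha\nu(x_*) + \beta(-E_{n+1})$ for some $\alpha,\beta \geq 0$; the case $\beta=0$ is the desired conclusion $\xi_0 = \nu(x_*)$, while $\alpha=0$ is excluded as in the floor case. In the remaining case $\alpha,\beta > 0$, the unit vector $\xi_0$ lies strictly on the minimizing geodesic in $\mbS^n$ joining $\nu(x_*)$ and $-E_{n+1}$, so Proposition \ref{Prop-angle-compare} gives
\[
\<\nu_F(x_*), -E_{n+1}\> = \<\Phi(\nu(x_*)), -E_{n+1}\> \leq \<\Phi(\xi_0), -E_{n+1}\>;
\]
combining with the boundary hypothesis $\<\nu_F(x_*), -E_{n+1}\> \geq 0$ yields $\<\Phi(\xi_0), E_{n+1}\> \leq 0$, again contradicting $\<z_0, E_{n+1}\> > 0$. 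Thus $x_*$ must lie in the relative interior of $\Si$, completing the proof.
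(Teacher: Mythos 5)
Your proof takes essentially the same route as the paper's: both identify a point of $\Si$ supporting the hyperplane with inward normal $\xi_0=\Phi^{-1}(z_0)$, and both use the anisotropic angle comparison (Proposition~\ref{Prop-angle-compare}) together with the boundary hypothesis to rule out contact on $\p\Si$. The paper realizes the supporting point by translating the hyperplane toward $\Si$ until first contact; you realize it by maximizing $\<\cdot,\xi_0\>$ over $\overline\Om$ — these are the same device, and your version is in fact a little more systematic about why contact on the floor is impossible.

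One logical wrinkle to tidy up: in the boundary case $x_*\in\p\Si$ you write that $\beta=0$ (i.e.\ $\xi_0=\nu(x_*)$) is ``the desired conclusion,'' yet you end with ``Thus $x_*$ must lie in the relative interior of $\Si$.'' As written this is inconsistent. In fact $\beta=0$ at a boundary point is \emph{also} a contradiction: it forces $z_0=\Phi(\xi_0)=\nu_F(x_*)$, and then the hypothesis $\<\nu_F(x_*),-E_{n+1}\>\geq 0$ gives $\<z_0,E_{n+1}\>\leq 0$, contradicting $z_0\in\mbR^{n+1}_+$ (this is exactly what the equality case of Proposition~\ref{Prop-angle-compare} encodes in the paper's argument). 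Folding $\beta=0$ into the contradiction makes the final sentence follow. Had you instead wished to accept a boundary maximizer with $\xi_0=\nu(x_*)$, you would also have had to verify $\rd\nu(x_*)\geq0$ there; this does hold — since $\Si\subset\overline\Om\subset\{\<\cdot,\xi_0\>\leq\<x_*,\xi_0\>\}$ and for each tangent direction $v$ one of $\pm v$ points into $\Si$, testing $\pm v$ gives $\<\rd\nu(x_*)v,v\>\geq0$ — but the point is moot because the case is vacuous.
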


\begin{proof}
For any $y\in\p\mcW^F\cap \mbR^{n+1}_+$, denote by $N(y)$ the outer unit normal to $\p\mcW^F$ at $y$. Then $\Phi(N(y))=y$.
Let ${\mfH}_y$ be a closed half-space whose inward unit normal is given by $N(y)$, such that $\S\cap \mfH_y=\emptyset$, and let ${\bf\Pi}_y$ be the boundary of $\mfH_y$, which is a hyperplane in $\mbR^{n+1}$.
Parallel translating ${\bf\Pi}_y$ towards $\S$ and denote by $\Pi_y$ the hyperplane that touches $\S$ for the first time at some $p\in\S$.
If $p\in \p\S\subset\p\mbR^{n+1}_+$, then by the first touching property, we have
\eq{\<\nu(p), -E_{n+1}\>\le \<N(y), -E_{n+1}\>.
}
Thanks to the first touching property, $\nu(p),N(y),$ and $-E_{n+1}$ are on the same $2$-plane and from the above relation we know, $N(y)$ lies in the length-minimizing geodesic on $\mbS^n$ joining $\nu(p)$ and $-E_{n+1}$.
Therefore using the anisotropic angle comparison principle (Proposition \ref{Prop-angle-compare}), we obtain
\eq{
\left<\nu_F(p),-E_{n+1}\right>
\leq\left<\Phi(N(y)),-E_{n+1}\right>= \left<y,-E_{n+1}\right>.
}
On the other hand, by assumption,
\eq{\<\nu_F(p), -E_{n+1}\>
\geq0
>\<y, -E_{n+1}\>,
}
which gives a contradiction.
Hence the first touching point $p\in{\rm int}(\S)$ and $\S$ is again tangent to $\Pi_y$ at $p$. Thus
$\nu(p)=N(y)$ and $\nu_F(p)=\Phi(\nu(p))=y$.
Note that the touching of $\S$ with $\Pi_y$ is from the interior, therefore at the touching point $p$, $\rd\nu(p)\ge 0$ and in turn, $\rd\nu_F(p)\ge 0$.
Thus we have $p\in \widetilde{\S_+}$ and $\nu_F(p)=y$, which completes the proof.
\end{proof}

\begin{proof}[Proof of Theorem \ref{Thm-aniso-halfspace}]
Our starting point is Proposition \ref{Prop-Gauss-map}.
To proceed, note that the Jacobian of the Gauss map with respect to $\S$ is just ${\rm J}^\S\rd\nu_F= H_n^F$,  where $H^F_n$ is the anisotropic Gauss-Kronecker curvature. By using \eqref{eq-area-volume}, the area formula and AM-GM inequality, we have
\eq{
(n+1)\abs{\mcW^F\cap \mbR^{n+1}_+}
&=\int_{\p\mcW^F\cap \mbR^{n+1}_+}F(\nu(y))\rd A(y)
\le \int_{\widetilde{\S_+}}F(\nu(p))H_n^F(p) \rd A(p)
\\&\le \int_{\widetilde{\S_+}}F(\nu)\left({H^F}\right)^n \rd A
\leq\int_\S F(\nu)\abs{{H^F}}^n \rd A,
}
which is \eqref{ineq-willmore-aniso-halfspace}.

If equality in \eqref{ineq-willmore-aniso-halfspace} holds, then all the inequalities in the above argument are actually equalities, so we readily infer that $\S$ agrees with $\widetilde{\S_+}$ 
and is in fact an anisotropic umbilical hypersurface, thanks to the AM-GM inequality. It is not hard to deduce that $\left<\nu_F(x),-E_{n+1}\right>=0$.
Hence $\S$ is an anisotropic free boundary Wulff shape in $\overline{\mbR^{n+1}_+}$.
\end{proof}

%----------
\subsection{Capillary hypersurfaces in a half-space}

As corollaries of Theorem \ref{Thm-aniso-halfspace}, we may deduce various Willmore-type inequalities for anisotropic capillary hypersurfaces in a half-space.

First, through the new Minkowski norm $\tilde{F}$ in \eqref{defn-tilde-F}  (see Proposition \ref{Prop-tilde-F}), the anisotropic capillary problem can be rephrased as an anisotropic free boundary problem.
Second, a special Minkowski norm  to our interest is defined as
\eq{\label{defn-F-theta}
F(\xi)=\abs{\xi}-\cos\theta_0\left<\xi,E_{n+1}\right>,
}
for some angle constant $\theta_0\in(0,\pi)$, by virtue of which the capillary problem in a half-space could be interpreted in terms of anisotropic terminology (see e.g., \cites{DeMasi22,LXX23}).
Plugging these Minkowski norms into Theorem \ref{Thm-aniso-halfspace}, we thereby obtain the following Willmore inequalities as claimed in the introduction, which
we state for readers' convenience.

\begin{theorem}\label{Thm-Willmore-ani-halfspace-Ch4}
Given $\om_0\in\left(-F(E_{n+1}),F(-E_{n+1})\right)$.
Let  $\S\subset\overline{\mbR^{n+1}_+}$ be a compact, embedded, $C^2$-hypersurface with boundary $\p\S\subset \p\mbR_+^{n+1}$ intersecting $\p\mbR^{n+1}_+$ transversally such that
\eq{
  \<\nu_F(x), -E_{n+1}\>=\om(x)\geq \om_0,\quad  \hbox{ for  any }x\in \partial\Sigma.
}
Then there holds
\eq{\label{ineq-willmore-ani-halfspace}
\frac1{n+1}\int_\S \left(F(\nu)+\om_0\left<\nu,E^F_{n+1}\right>\right)\abs{{H^F}}^n\rd A\ge \abs{\mcW^F_{1,\om_0}},
}
where $\mcW^F_{1,\om_0}=\mcW^F_1(\om_0 E^F_{n+1})\cap{\mbR_+^{n+1}}$.
% with respect to the inner normal $-\nu$. 
Moreover, equality in \eqref{ineq-willmore-ani-halfspace} holds if and only if $\S$ is an anisotropic $\om_0$-capillary Wulff shape in $\overline{\mbR^{n+1}_+}$.
\end{theorem}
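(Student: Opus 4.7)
The plan is to deduce this theorem as an immediate corollary of Theorem \ref{Thm-aniso-halfspace}, applied not to $F$ itself but to the modified Minkowski norm
\[
\tilde F(\xi) \coloneqq F(\xi) + \om_0 \langle \xi, E^F_{n+1}\rangle
\]
introduced in Proposition \ref{Prop-tilde-F}. In other words, the whole strategy is to translate the anisotropic $\om_0$-capillary inequality-type hypothesis for $F$ into an anisotropic free boundary inequality-type hypothesis for $\tilde F$, invoke the known free boundary Willmore inequality, and then translate every quantity back.

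First I would check that $\tilde F$ is admissible as a Minkowski norm: positivity of $\tilde F$ on $\mbS^n$ is precisely Proposition \ref{Prop-non-negative-aniso-halfspace}, and convexity is automatic because the added term is linear, so $D^2\tilde F = D^2 F$ and hence $A_{\tilde F} = A_F$ remains positive definite. In particular, $H^{\tilde F} = H^F$ pointwise on $\Sigma$.

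Next I would convert the boundary hypothesis. From \eqref{eq-DF-DtildeF} we have $\nu_{\tilde F}(x) = \nu_F(x) + \om_0 E^F_{n+1}$, so together with $\langle E^F_{n+1}, E_{n+1}\rangle = 1$,
\[
\langle \nu_{\tilde F}(x), -E_{n+1}\rangle = \om(x) - \om_0 \geq 0 \quad \text{for every } x \in \partial\Sigma.
\]
This is precisely the hypothesis of Theorem \ref{Thm-aniso-halfspace} applied to $\Sigma$ with respect to $\tilde F$, which then yields
\[
\frac{1}{n+1}\int_\Sigma \tilde F(\nu) \Abs{H^{\tilde F}}^n \rd A \geq \bigl|\mcW^{\tilde F}\cap \mbR^{n+1}_+\bigr|.
\]

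To conclude, I would substitute the definitions back in: $\tilde F(\nu) = F(\nu) + \om_0 \langle \nu, E^F_{n+1}\rangle$, $H^{\tilde F} = H^F$, and by Proposition \ref{Prop-tilde-F} the unit Wulff shape satisfies $\partial \mcW^{\tilde F} = \partial \mcW^F_1(\om_0 E^F_{n+1})$, so the right-hand side equals $|\mcW^F_{1,\om_0}|$, giving exactly \eqref{ineq-willmore-ani-halfspace}. For rigidity, equality in Theorem \ref{Thm-aniso-halfspace} forces $\Sigma$ to be an anisotropic free boundary Wulff shape for $\tilde F$, i.e.\ $\langle \nu_{\tilde F}, -E_{n+1}\rangle \equiv 0$, which after translating back reads $\om \equiv \om_0$ on $\partial\Sigma$, so $\Sigma$ is an anisotropic $\om_0$-capillary Wulff shape. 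I do not foresee any serious obstacle: the argument is essentially a dictionary translation once Proposition \ref{Prop-tilde-F} is in hand, and the only mildly subtle point is that the equality case automatically promotes the one-sided condition $\om \geq \om_0$ to the capillary identity $\om \equiv \om_0$.
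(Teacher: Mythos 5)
Your proposal is correct and coincides exactly with the paper's own route: the paper deduces Theorem~\ref{Thm-Willmore-ani-halfspace-Ch4} as a corollary of Theorem~\ref{Thm-aniso-halfspace} by substituting the shifted norm $\tilde F$ from Proposition~\ref{Prop-tilde-F}, which is precisely the dictionary translation you carry out. You have merely filled in the details (admissibility of $\tilde F$, conversion of the boundary inequality, identification of $\mcW^{\tilde F}\cap\mbR^{n+1}_+$ with $\mcW^F_{1,\om_0}$, and the rigidity step) that the paper leaves implicit.
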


\begin{theorem}
Given $\theta_0\in(0,\pi)$.
Let  $\S\subset\overline{\mbR_+^{n+1}}$ be a compact, embedded $C^2$-hypersurface with boundary $\p\S\subset \p\mbR_+^{n+1}$ intersecting $\p\mbR^{n+1}_+$ transversally, such that
\eq{%\label{condi-isotropic-halfspace}
\left<\nu, -E_{n+1}\right>\geq -\cos\theta_0,\quad \hbox{ for any } x\in\p\S.
}
Then there holds
\eq{\label{ineq-willmore-isotropic-halfspace}
\frac{1}{n+1}\int_{\Sigma}\left(1-\cos\theta_0\<\nu, E_{n+1}\>\right)\Abs{{H}}^{n} \rd A\ge \abs{B_{1,\theta_0}},
}
where  
$B_{1,\theta_0}=B_1(-\cos\theta_0 E_{n+1})\cap{\mbR_+^{n+1}}$.
Moreover, equality in \eqref{ineq-willmore-isotropic-halfspace} holds if and only if $\S$ is a $\theta_0$-capillary spherical cap in $\overline{\mbR^{n+1}_+}$.
\end{theorem}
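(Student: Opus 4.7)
The plan is to obtain this theorem as the direct specialization of Theorem \ref{Thm-Willmore-ani-halfspace-Ch4} to the case in which the Minkowski norm $F$ is the Euclidean norm and the capillarity parameter is $\om_0=-\cos\theta_0$. Once the dictionary between the anisotropic objects (for $F=\abs{\cdot}$) and the Euclidean ones is written out, the substitution is essentially mechanical, and no argument beyond Theorem \ref{Thm-Willmore-ani-halfspace-Ch4} is required.

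First I would check admissibility of the parameter: for $F=\abs{\cdot}$ one has $F(E_{n+1})=F(-E_{n+1})=1$, so $\om_0=-\cos\theta_0\in(-1,1)=(-F(E_{n+1}),F(-E_{n+1}))$ for every $\theta_0\in(0,\pi)$, as demanded. Then I would read off the anisotropic quantities in this isotropic setting: $\nu_F=\nu$, $H^F=H$, $F(\nu)=1$, and, by \eqref{eq-E-F}, $E^F_{n+1}=E_{n+1}$ (one checks that each of the three branches of that definition yields $E_{n+1}$ when $F=\abs{\cdot}$, since $\Phi(z)=z$ on $\mbS^n$). Consequently the anisotropic capillarity condition $\<\nu_F,-E_{n+1}\>\geq\om_0$ collapses to $\<\nu,-E_{n+1}\>\geq-\cos\theta_0$, matching the present hypothesis.

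Plugging these identifications into \eqref{ineq-willmore-ani-halfspace}, the left-hand side becomes $\frac{1}{n+1}\int_{\S}\bigl(1-\cos\theta_0\<\nu,E_{n+1}\>\bigr)\abs{H}^n\rd A$, while the right-hand side $\abs{\mcW^F_{1,\om_0}}$ is the Euclidean unit ball centered at $-\cos\theta_0\,E_{n+1}$ intersected with $\mbR_+^{n+1}$, i.e., $\abs{B_{1,\theta_0}}$. This is precisely \eqref{ineq-willmore-isotropic-halfspace}. For the equality case, Theorem \ref{Thm-Willmore-ani-halfspace-Ch4} forces $\S$ to be an anisotropic $\om_0$-capillary Wulff shape; in the Euclidean setting this is by definition a round spherical cap meeting $\p\mbR_+^{n+1}$ at the constant contact angle $\theta_0$, i.e., a $\theta_0$-capillary spherical cap.

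There is no genuine obstacle; one could equally well deduce the statement directly from Theorem \ref{Thm-aniso-halfspace} applied to the tilted Minkowski norm $\tilde F(\xi)=\abs{\xi}-\cos\theta_0\<\xi,E_{n+1}\>$, whose Hessian agrees with that of $\abs{\cdot}$ (so $H^{\tilde F}=H$) and whose unit Wulff shape is exactly $\p B_1(-\cos\theta_0 E_{n+1})$. The only step warranting a little care is the bookkeeping of the constant vector $E^F_{n+1}$ via \eqref{eq-E-F}, which however simplifies immediately in the Euclidean case.
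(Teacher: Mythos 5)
Your proposal is correct and matches the paper's own derivation: the paper obtains this theorem as an immediate corollary of the anisotropic half-space Willmore inequality, with no separate argument. The paper states it by substituting the tilted Minkowski norm $F(\xi)=\abs{\xi}-\cos\theta_0\langle\xi,E_{n+1}\rangle$ directly into Theorem \ref{Thm-aniso-halfspace}, while you specialize $F=\abs{\cdot}$ and $\om_0=-\cos\theta_0$ in Theorem \ref{Thm-Willmore-ani-halfspace-Ch4} — but these are literally the same computation, since $\tilde F$ built from $F=\abs{\cdot}$ and $\om_0=-\cos\theta_0$ via \eqref{defn-tilde-F} is exactly the tilted norm; you even note this equivalence yourself at the end.
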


On the other hand, by revisiting the geometric property concerning Gauss map in the (anisotropic) capillary settings, we could prove a variant of Theorem \ref{Thm-Willmore-ani-halfspace-Ch4} as follows.
\begin{theorem}\label{Thm-aniso-halfspace-'}
Given $\om_0\in\left(-F(E_{n+1}),F(-E_{n+1})\right)$.
Let  $\S\subset\overline{\mbR^{n+1}_+}$ be a compact, embedded, $C^2$-hypersurface with boundary $\p\S\subset \p\mbR_+^{n+1}$ intersecting $\p\mbR^{n+1}_+$ transversally such that
\eq{\label{condi-aniso-half-space-'}
  \<\nu_F(x), -E_{n+1}\>=\om(x)\geq \om_0,\quad  \hbox{ for  any }x\in \partial\Sigma.
}
Then there holds
\eq{\label{ineq-willmore-aniso-halfspace-'}
\int_\S F(\nu)\abs{{H^F}}^n \rd A\ge \int_{\p\mcW^F_{1,\om_0}\cap\mbR^{n+1}_+}F(\nu)\rd A.
}
%where $\p\mcW^F_{1,\om_0}\cap\mbR^{n+1}_+=\{y\in\p\mcW_F:\left<y,-E_{n+1}\right>\leq\om_0\}$.
Moreover, equality in \eqref{ineq-willmore-aniso-halfspace-'} holds if and only if $\S$ is an anisotropic $\om_0$-capillary Wulff shape in $\overline{\mbR^{n+1}_+}$.
\end{theorem}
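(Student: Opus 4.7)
The plan is to mimic the Gauss-map argument used for Theorem \ref{Thm-aniso-halfspace}, but to carry the capillary data directly through the anisotropic angle comparison rather than absorbing $\om_0$ into a new Minkowski norm. The key geometric step is a ``shifted'' covering property for the anisotropic Gauss map:
\eq{
\{z\in\p\mcW^F:\<z,E_{n+1}\>>-\om_0\}\subset\nu_F(\widetilde{\S_+}).
}
To establish it, fix $w\in\p\mcW^F_{1,\om_0}\cap{\rm int}(\mbR^{n+1}_+)$ and set $z=w-\om_0 E^F_{n+1}\in\p\mcW^F$. Let $N(z)\in\mbS^n$ be the outer unit normal at $z$, so $\Phi(N(z))=z$, and parallel-translate a hyperplane with inward normal $N(z)$ toward $\S$ until it first touches at some $p$. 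If $p\in\p\S$, the first-touching geometry places $N(z)$ on the length-minimizing geodesic joining $\nu(p)$ and $-E_{n+1}$ in $\mbS^n$, exactly as in the proof of Proposition \ref{Prop-Gauss-map}. Proposition \ref{Prop-angle-compare} then gives
\eq{
\<\nu_F(p),-E_{n+1}\>\leq\<z,-E_{n+1}\>=-\<z,E_{n+1}\><\om_0,
}
contradicting \eqref{condi-aniso-half-space-'}. Hence $p\in{\rm int}(\S)$, $\S$ touches the hyperplane from inside, $\nu_F(p)=z$, and $\rd\nu_F(p)\geq 0$; that is, $p\in\widetilde{\S_+}$.

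Next, I apply the area formula to $\nu_F:\widetilde{\S_+}\to\p\mcW^F$, whose tangential Jacobian equals the anisotropic Gauss--Kronecker curvature $H^F_n\geq 0$. Since $\nu_F(p)=y$ implies $\nu(p)=N(y)$, integrating with weight $F\circ N$ and using the covering just proved yields
\eq{
\int_{\widetilde{\S_+}}F(\nu)H^F_n\,\rd A\geq\int_{\nu_F(\widetilde{\S_+})}F(N(y))\,\rd A(y)\geq\int_{\{y\in\p\mcW^F:\<y,E_{n+1}\>\geq -\om_0\}}F(N(y))\,\rd A(y).
}
The translation $y\mapsto y+\om_0 E^F_{n+1}$ preserves outer unit normals and surface measure, and carries the last domain onto $\p\mcW^F_{1,\om_0}\cap\mbR^{n+1}_+$, so the right-hand side becomes $\int_{\p\mcW^F_{1,\om_0}\cap\mbR^{n+1}_+}F(\nu)\,\rd A$. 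Since $H^F\geq 0$ on $\widetilde{\S_+}$, AM--GM gives $H^F_n\leq (H^F)^n=|H^F|^n$, and altogether
\eq{
\int_\S F(\nu)|H^F|^n\,\rd A\geq\int_{\widetilde{\S_+}}F(\nu)H^F_n\,\rd A\geq\int_{\p\mcW^F_{1,\om_0}\cap\mbR^{n+1}_+}F(\nu)\,\rd A,
}
which is \eqref{ineq-willmore-aniso-halfspace-'}.

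For rigidity, saturation of each inequality above forces, in order, that $H^F\equiv 0$ on $\S\setminus\widetilde{\S_+}$, that $\nu_F$ is a.e.\ injective on $\widetilde{\S_+}$ with image exactly the translated piece of $\p\mcW^F$, and that equality holds in AM--GM on $\widetilde{\S_+}$. The last condition forces $\S$ to be anisotropic umbilical on $\widetilde{\S_+}$, so $\widetilde{\S_+}\subset\p\mcW^F_{R_0}(x_0)$ for some $R_0>0$ and $x_0\in\mbR^{n+1}$; by continuity of $H^F$ and connectedness of $\S$, one then excludes a nonempty $\S\setminus\widetilde{\S_+}$ and identifies $\S$ with a portion of $\p\mcW^F_{R_0}(x_0)$. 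Matching $\p\S\subset\p\mbR^{n+1}_+$ with the boundary of the translated Gauss image pins down $x_0$ on the axis $\mbR E^F_{n+1}$ at the unique height making $\<\nu_F,-E_{n+1}\>\equiv \om_0$ along $\p\S$, so that $\S$ is indeed an anisotropic $\om_0$-capillary Wulff shape. The main obstacle is the covering step: one must verify that Proposition \ref{Prop-angle-compare} still excludes first touching on $\p\S$ once the height threshold is shifted by $\om_0$, and the argument above absorbs this shift cleanly into the constant $-\om_0$ appearing on both sides of the resulting inequality.
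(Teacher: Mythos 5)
Your proof is correct and follows the same three-step strategy as the paper: establish a Gauss-map covering property for $\widetilde{\S_+}$, apply the area formula with Jacobian $H^F_n$, and finish with AM--GM and the translation identity relating $\{y\in\p\mcW^F:\<y,-E_{n+1}\>\leq\om_0\}$ to $\p\mcW^F_{1,\om_0}\cap\mbR^{n+1}_+$. The only real difference is how you obtain the covering property (the paper's Proposition \ref{Prop-Gauss-map'}): you rerun the first-touching and anisotropic-angle-comparison argument of Proposition \ref{Prop-Gauss-map} directly with the threshold $\om_0$ in place of $0$, whereas the paper reduces to the free-boundary case by passing to the tilted norm $\tilde F(\xi)=F(\xi)+\om_0\<\xi,E^F_{n+1}\>$ via Proposition \ref{Prop-tilde-F}; both are valid, and your direct version has the small merit of making the role of $\om_0$ in the contradiction step explicit while avoiding the auxiliary norm.
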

The revisited variant of Proposition \ref{Prop-Gauss-map} reads as follows.
\begin{proposition}\label{Prop-Gauss-map'}
Given $\om_0\in\left(-F(E_{n+1}),F(-E_{n+1})\right)$.
Let $\S\subset\overline{\mbR^{n+1}_+}$ be a compact, embedded, $C^2$-hypersurface with boundary $\p\S\subset\p\mbR_+^{n+1}$ such that
\eq{
  \<\nu_F(x), -E_{n+1}\>=\om(x)\geq\om_0,\quad  \hbox{ for  any }x\in \partial\Sigma.
}
Then the anisotropic Gauss map $\nu_F:\S\ra\p\mcW^F$ satisfies
\eq{
\{y\in\p\mcW^F:\<y, -E_{n+1}\>\leq\om_0\}
\subset\nu_F(\widetilde{\S_+}).
}
\end{proposition}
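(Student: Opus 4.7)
The plan is to reduce the proposition to Proposition~\ref{Prop-Gauss-map} via the translated Minkowski norm $\tilde F(\xi) := F(\xi) + \om_0 \langle \xi, E_{n+1}^F\rangle$ from Proposition~\ref{Prop-tilde-F}. Differentiating yields $D\tilde F = DF + \om_0 E_{n+1}^F$, so along $\S$ one has $\nu_{\tilde F}(x) = \nu_F(x) + \om_0 E_{n+1}^F$. Because $D^2\tilde F = D^2 F$, the anisotropic Weingarten maps agree, $d\nu_{\tilde F} = d\nu_F$, and the set $\widetilde{\S_+}$ is the same whether defined using $F$ or $\tilde F$. Using $\langle E_{n+1}^F, E_{n+1}\rangle = 1$, the boundary hypothesis $\om(x) \geq \om_0$ rewrites as $\langle \nu_{\tilde F}(x), -E_{n+1}\rangle \geq 0$ on $\p\S$, which is exactly the hypothesis of Proposition~\ref{Prop-Gauss-map} applied with $\tilde F$. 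That proposition then yields $\p\mcW^{\tilde F} \cap \mbR_+^{n+1} \subset \nu_{\tilde F}(\widetilde{\S_+})$.

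To translate this back to $F$, I invoke Proposition~\ref{Prop-tilde-F} to identify $\p\mcW^{\tilde F}$ with $\p\mcW^F_1(\om_0 E_{n+1}^F)$, so every point of $\p\mcW^{\tilde F}$ is uniquely of the form $y + \om_0 E_{n+1}^F$ for some $y \in \p\mcW^F$. The membership $y + \om_0 E_{n+1}^F \in \mbR_+^{n+1}$ is equivalent to $\langle y, -E_{n+1}\rangle < \om_0$, and $\nu_{\tilde F}(p) = y + \om_0 E_{n+1}^F$ if and only if $\nu_F(p) = y$. The inclusion above then transforms into $\{y \in \p\mcW^F : \langle y, -E_{n+1}\rangle < \om_0\} \subset \nu_F(\widetilde{\S_+})$. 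Since $\widetilde{\S_+}$ is closed in the compact $C^2$-surface $\S$, its image under the continuous map $\nu_F$ is compact, hence closed; passing to the closure on the left-hand side upgrades the strict inequality to the claimed non-strict inclusion.

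The entire geometric content is absorbed in Proposition~\ref{Prop-Gauss-map}, whose proof relies on hyperplane translation combined with the anisotropic angle comparison of Proposition~\ref{Prop-angle-compare}. A direct approach imitating that proof would also be feasible but would meet the subtle obstacle of verifying that a boundary touching point $p \in \p\S$ saturating $\om(p) = \om_0$ still lies in $\widetilde{\S_+}$, i.e.\ has positive-semi-definite $d\nu_F$; the $\tilde F$-reduction sidesteps this issue entirely since Proposition~\ref{Prop-Gauss-map} has already handled the analogous interior-touching analysis internally.
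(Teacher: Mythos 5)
Your proof is correct and follows the same reduction to Proposition \ref{Prop-Gauss-map} via the shifted norm $\tilde F$ (Proposition \ref{Prop-tilde-F}) that the paper uses. You are in fact slightly more careful than the paper's one-line proof: the open half-space in Proposition \ref{Prop-Gauss-map} yields only the strict inequality $\langle y,-E_{n+1}\rangle<\om_0$, and your closure argument (compactness of $\widetilde{\S_+}$ plus the fact that $\om_0$ is a regular value of the height function on $\p\mcW^F$) cleanly upgrades this to the stated non-strict inclusion.
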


Proposition \ref{Prop-Gauss-map'} follows from  Proposition \ref{Prop-Gauss-map} by introducing $\tilde{F}$ in \eqref{defn-tilde-F} and using Proposition \ref{Prop-tilde-F}.

\begin{proof}[Proof of Theorem \ref{Thm-aniso-halfspace-'}]
We note that
\eq{
\left\{y\in\p\mcW^F:\left<y,-E_{n+1}\right>\leq\om_0\right\}+\om_0E_{n+1}^F
=\p\mcW^F_1(\om_0 E^F_{n+1})\cap{\mbR_+^{n+1}},
}
and hence $$\int_{\left\{y\in\p\mcW^F:\left<y,-E_{n+1}\right>\leq\om_0\right\}}F(\nu(y))\rd A(y)=\int_{\p\mcW_{1,\om_0}^F\cap \mbR^{n+1}_+}F(\nu(y))\rd A(y).$$
By virtue of  Proposition \ref{Prop-Gauss-map'},
 we use the area formula in the following way,
\eq{
&\int_{\p\mcW_{1,\om_0}^F\cap \mbR^{n+1}_+}F(\nu(y))\rd A(y)
\le \int_{\widetilde{\S_+}}F(\nu(p))H_n^F(p) \rd A(p)
\\&\le \int_{\widetilde{\S_+}}F(\nu)\left({H^F}\right)^n \rd A
\leq\int_\S F(\nu)\abs{{H^F}}^n \rd A,
}
which is  \eqref{ineq-willmore-aniso-halfspace-'}.

The proof of rigidity follows similarly as that in the proof of Theorems \ref{Thm-aniso-halfspace}.
\end{proof}

\begin{remark}
\normalfont
In view of \eqref{eq2.9}, it is clear that the two Willmore-type inequalities \eqref{ineq-willmore-ani-halfspace} and \eqref{ineq-willmore-aniso-halfspace-'} are different unless $\om_0=0$.
\end{remark}

%===========

\bibliographystyle{alpha}
\bibliography{BibTemplate.bib}

\end{document}